\journal{Journal of Multivariate Analysis}
\newtheorem{theorem}{Theorem}[section]
\newtheorem*{theorem*}{Theorem 1.1}
\newtheorem*{acknow*}{Acknowledgement}
\newtheorem{example}[theorem]{Example}
\newtheorem{lemma}[theorem]{Lemma}
\newtheorem{proposition}[theorem]{Proposition}
\newtheorem{remark}{Remark}[section]
\DeclareSymbolFont{bbold}{U}{bbold}{m}{n}
\DeclareSymbolFontAlphabet{\mathbbold}{bbold}
\newcommand{\Xset}{\left\{X_1,\ldots, X_n\right\}}
\newcommand{\reals}{\mathbb{R}}					          
\newcommand{\Ex}{{\rm E}}				                  
\newcommand{\sphere}{\mathbb{S}^d}				         
\newcommand{\Y}{Y_{\ell, m}}    
\newcommand{\betac}{\beta_{j,k}}                              
\newcommand{\barg}{\frac{\ell}{B^j}}                          
\newcommand{\Yset}{\left\{Y_1,\ldots, Y_n\right\}}
\newcommand{\cubew}{\lambda_{j,k}}
\newcommand{\cubep}{\xi_{j,k}}
\newcommand{\ind}[1]{\mathbbold{1}{\bbra{#1}}}
\newcommand{\suml}[1]{\sum_{\ell \in \Lambda_j}{#1}}
\newcommand{\Lambdaj}{\Lambda_j}		
\newcommand{\fest}{{\hat f}_n}
\newcommand{\Ltwo}{{L^2\bra{\mathbb{S}^d}}}
\newcommand{\Lp}{{L^p\bra{\mathbb{S}^d}}}
\newcommand{\besov}{\mathcal{B}_{p,q}^s}
\newcommand{\betaest}{\widehat{\beta}_{j,k}}
\newcommand{\tauthres}{\tau_j}
\newcommand{\besovgen}[3]{\mathcal{B}_{{#1},{#2}}^{#3}}
\newcommand{\Thetaest}{\widehat{\Theta}_j(p)}
\newcommand{\noise}{\varepsilon_i}
\newcommand{\noiseset}{\left\{\varepsilon_1,\ldots, \varepsilon_n\right\}}
\newcommand{\M}{\mathcal{M}}
\newcommand{\eigen}{\gamma}
\providecommand{\abs}[1]{\left\vert#1\right\vert}		
\providecommand{\norm}[1]{\left\Vert#1\right\Vert}			
\providecommand{\bra}[1]{\left(#1\right)}			          
\providecommand{\sbra}[1]{\left[#1\right]}			        
\providecommand{\bbra}[1]{\left\{#1\right\}}			      
\providecommand{\needlet}[1]{\psi_{j,k}\left(#1\right)}		
\providecommand{\bfun}[1]{b\left(#1\right)}
\renewcommand{\theta}{\vartheta}
\renewcommand{\phi}{\varphi}								
\renewcommand{\ll}{\ell}							         
\renewcommand{\l}{\left}                                      
\renewcommand{\r}{\right}                                    
\begin{document}
	
	\begin{frontmatter}	
		\title{Adaptive global thresholding on the sphere}
		\author[mainaddress]{Claudio Durastanti\fnref{myfootnote}}
		\ead{claudio.durastanti@gmail.com}
		\address[mainaddress]{Fakult\"at f\"ur Matematik, Ruhr Universit\"at, Bochum}
		\fntext[myfootnote]{The author is supported by Deutsche Forschungsgemeinschaft (DFG) - GRK 2131, ``High-dimensional Phenomena in Probability --- Fluctuations and Discontinuity".}

		\begin{abstract}
\noindent 
This work is concerned with the study of the adaptivity properties of nonparametric regression estimators  over the $d$-dimensional sphere within the global thresholding framework. The estimators are constructed by means of a form of spherical wavelets, the so-called needlets, which enjoy strong concentration properties in both harmonic and real domains. The author establishes the convergence rates of the $L^p$-risks of these estimators, focussing on their minimax properties and proving their optimality over a scale of nonparametric regularity function spaces, namely, the Besov spaces.
	\end{abstract}
		
		\begin{keyword}
			Global thresholding\sep needlets\sep spherical data\sep nonparametric regression\sep U-statistics\sep Besov spaces\sep adaptivity.
			\MSC[2010] 62G08\sep 62G20\sep 65T60
		\end{keyword}
		
	\end{frontmatter}

\section{Introduction}\label{sec:intro}

The purpose of this paper is to establish adaptivity for the $L^p$-risk of regression function estimators in the nonparametric setting over the $d$-dimensional sphere $\sphere$. The optimality of the $L^p$ risk is established by means of global thresholding techniques and spherical wavelets known as needlets.

Let $(X_1, Y_1), \ldots, (X_n,Y_n)$ be independent pairs of random variables such that, for each $i \in \{ 1, \ldots ,n\}$, $X_i \in \sphere$ and $Y_i \in \mathbb{R}$. The random variables $X_1, \ldots , X_n$ are assumed to be mutually independent and uniformly distributed locations on the sphere. It is further assumed that, for each $i \in \{1, \ldots , n\}$,
\begin{equation}\label{eqn:regression}
Y_i= f\bra{X_i} + \noise ,
\end{equation}
where $f:\sphere \mapsto \reals$ is an unknown bounded function, i.e., there exists $M>0$ such that 
\begin{equation}\label{eqn:boundf}
\sup_{x\in\sphere}\abs{f\bra{x}} \leq M <\infty.
\end{equation}
Moreover, the random variables $\epsilon_1, \ldots , \epsilon_n$ in Eq.~\eqref{eqn:regression} are assumed to be mutually independent and identically distributed with zero mean. Roughly speaking, they can be viewed as the observational errors and in what follows, they will be assumed to be sub-Gaussian. 

In this paper, we study the properties of nonlinear global hard thresholding estimators, in order to establish the optimal rates of convergence of $L^p$-risks for functions belonging to the so-called Besov spaces. 

\subsection{An overview of the literature}\label{sec:overview}

In recent years, the issue of minimax estimation in nonparametric settings has received considerable attention in the statistical inference literature. The seminal contribution in this area is due to Donoho et al. \cite{donoho1}. In this paper, the authors provide nonlinear wavelet estimators for density functions on $\reals$, lying over a wide nonparametric regularity function class, which attain optimal rates of convergence up to a logarithmic factor. Following this work, the interaction between wavelet systems and nonparametric function estimation has led to a considerable amount of developments, mainly in the standard Euclidean framework; see, e.g., \cite{cai1, cai, tsyb, Kerkypicard92,  Kerkypicard93, thres,kpt96} and the textbooks \cite{WASA,tsybakov} for further details and discussions. 

More recently, thresholding methods have been applied to broader settings. In particular, nonparametric estimation results have been achieved on $\sphere$ by using a second generation wavelet system, namely, the spherical needlets. Needlets were introduced by Narcowich et al. \cite{npw2,npw1}, while their stochastic properties dealing with various applications to spherical random fields were examined in \cite{bkmpAoS, cammar, ejslan, spalan,m}. Needlet-like constructions were also established over more general manifolds by Geller and Mayeli \cite{gm3,gm1,gm2,gelpes}, Kerkyacharian et al. \cite{knp} and Pesenson \cite{pesenson} among others, and over spin fiber bundles by Geller and Marinucci \cite{gelmar, gelmar2010}. 

In the nonparametric setting, needlets have found various applications on directional statistics. Baldi et al. \cite{bkmpAoSb} established minimax rates of convergence for the $L^p$-risk of nonlinear needlet density estimators within the hard local thresholding paradigm, while analogous results concerning regression function estimation were established by Monnier \cite{monnier}. The block thresholding framework was investigated in Durastanti \cite{durastanti2}. Furthermore, the adaptivity of nonparametric regression estimators of spin function was studied in Durastanti et al. \cite{dmg}. In this case, the regression function takes as its values algebraical curves lying on the tangent plane for each point on $\mathbb{S}^2$ and the wavelets used are the so-called spin (pure and mixed) needlets; see Geller and Marinucci \cite{gelmar, gelmar2010}. 

The asymptotic properties of other estimators for spherical data, not concerning the needlet framework, were investigated by Kim and Koo \cite{kimkoo,kim,kookim}, while needlet-like nearly-tight frames were used in Durastanti \cite{durastanti3} to establish the asymptotic properties of density function estimators on the circle. Finally, in Gautier and Le Pennec \cite{gautier}, the adaptive estimation by needlet thresholding was introduced in the nonparametric random coefficients binary choice model. Regarding the applications of these methods in practical scenarios, see, e.g., \cite{fayetal13,fay08,  iuppa}, where they were fruitfully applied to some astrophysical problems, concerning, for instance, high-energy cosmic rays and Gamma rays.     

\subsection{Main results}\label{sub:main}

Consider the regression model given in Eq.~\eqref{eqn:regression} and let $\{\psi_{j,k}: j \geq 0, k=1,\ldots,K_j\}$ be the set of $d$-dimensional spherical needlets. Roughly speaking, $j$ and $K_j$ denote the resolution level $j$ and the cardinality of needlets at the resolution level $j$, respectively. The regression function $f$ can be rewritten in terms of its needlet expansion. Namely, for all $x \in \sphere$, one has
\begin{equation*}\label{eqn:needexp}
f\bra{x} =\sum_{j \geq 0} \sum_{k=1}^{K_j} \betac \needlet{x},
\end{equation*}
where $\{\betac: j \geq 0, k=1,\ldots,K_j\}$ is the set of needlet coefficients.

For each $j\geq 0$ and $k \in \{ 1,\ldots, K_j\}$, a natural unbiased estimator for $\betac$ is given by the corresponding empirical needlet coefficient, viz.
\begin{equation}\label{eqn:needest}
\betaest=\frac{1}{n}\sum_{i=1}^n Y_i \needlet{X_i};
\end{equation}
see, e.g., Baldi et al. \cite{bkmpAoSb} and H\"ardle et al. \cite{WASA}. Therefore, the global thresholding needlet estimator of $f$ is given, for each $x \in \sphere$, by
\begin{equation}\label{eqn:fest}
{\hat f}_n\bra{x} =\sum_{j = 0}^{J_n}\tauthres \sum_{k=1}^{K_{J_n}} \betaest \needlet{x},
\end{equation}
where $\tauthres$ is a nonlinear threshold function comparing the given $j$-dependent statistic $\Thetaest$, built on a subsample of $p<n$ observations, to a threshold based on the observational sample size. If $\Thetaest$ is above the threshold, the whole $j$-level is kept; otherwise it is discarded. 

Loosely speaking, this procedure allows one to delete the coefficients corresponding to a resolution level $j$ whose contribution to the reconstruction of the regression function $f$ is not clearly distinguishable from the noise. Following Kerkyacharian et al. \cite{kpt96}, we consider the so-called hard thresholding framework, defined as
\begin{equation*}\label{eqn:thres}
\tauthres =\tauthres (p)= \mathbbold{1} \{ \hat{\Theta}_j(p) \geq B^{dj}n^{-p/2}\},
\end{equation*}   
where $p \in \mathbb{N}$ is even. Further details regarding the statistic $\hat{\Theta}_j (p)$ will be discussed in Section \ref{sub:thres}, where the choice of the threshold $B^{dj}n^{-p/2}$ will also be motivated. 

For the rest of this section, we consider $\Thetaest$ as an unbiased statistic of $|\beta_{j,1}|^p + \cdots + |\beta_{j,K_j}|^p$. The so-called truncation bandwidth $J_n$, on the other hand, is the higher frequency on which the empirical coefficients ${\hat \beta}_{j,1} , \ldots , {\hat \beta}_{j,K_j}$ are computed. The optimal choice of the truncation level is $J_n = \ln _B (n^{1/d})$; for details, see Section \ref{sec:global}. This allows the error due to the approximation of $f$, which is an infinite sum with respect to $j$, to be controlled by a finite sum, such as the estimator $\fest$.
 
Our objective is to estimate the global error measure for the regression estimator $\fest$. For this reason, we study the worst possible performance over a so-called nonparametric regularity class $\{\mathcal{F}_\alpha: \alpha \in A\}$ of function spaces  of the $L^p$-risk, i.e.,
\begin{equation*}
R_n \bigl( {\hat f}_n;\mathcal{F_\alpha} \bigr) =\sup_{f \in \mathcal{F_\alpha}}\Ex\Big(\| {\hat f}_n - f\|_\Lp ^p\Bigr).
\end{equation*}
Recall that an estimator ${\hat f}_n$ is said to be adaptive for the $L^p$-risk and for the scale of classes $\{\mathcal{F}_\alpha: \alpha \in A\}$ if, for every $\alpha \in A$, there exists a constant $c_\alpha>0$ such that 
\begin{equation*}
\Ex\Big( \| {\hat f}_n - f\|_\Lp^p \Bigr) \leq c_\alpha R_n \big({\hat f}_n;\mathcal{F}_\alpha\bigr);
\end{equation*} 
see, e.g., \cite{bkmpAoSb,WASA,kpt96}.

For $r>0$ and for $p \in [1,r]$, we will establish that the regression estimator $\fest$ is adaptive for the class of Besov spaces $\besovgen{p}{q}{s}$, where $1\leq q \leq \infty$ and $d/p\leq s < r+1$. Finally, let $R \in (0,\infty)$ be the radius of the Besov ball on which $f$ is defined. The proper choice of $r$ will be motivated in Section \ref{sub:harm}. Our main result is described by the following theorem.

\begin{theorem}\label{thm:main}
Given $r\in\bra{1,\infty}$, let $p \in [1,r]$. Also, let ${\hat f}_n$ be given by Eq.~\eqref{eqn:fest}, with $J_n=\ln_B n^{1/d}$. Then, for $1\leq q \leq \infty$,  $d/p\leq s < r+1$ and $0<R<\infty$, there exists $C>0$ such that 
\begin{equation*}
\sup_{f \in \besovgen{r}{q}{s}\bra{R}} \Ex\Bigl( \| {\hat f}_n - f\|_\Lp^p \Bigr)\leq C n^{\frac{-sp}{2s+d}} .
\end{equation*}
\end{theorem}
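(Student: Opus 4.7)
The plan is to follow the global-thresholding template of Kerkyacharian, Picard and Tribouley~\cite{kpt96}, transplanted to the spherical needlet frame in the spirit of~\cite{bkmpAoSb}. First, decompose
\begin{equation*}
\fest - f = \sum_{j=0}^{J_n} \sumk \bra{\tauthres \betaest - \betac} \needlet{\cdot} - \sum_{j > J_n} \sumk \betac \needlet{\cdot},
\end{equation*}
and bound the $L^p$-risk by a deterministic approximation tail plus a stochastic term via the triangle inequality. With $J_n = \ln_B n^{1/d}$ and $s \geq d/p$, the Besov regularity of $f \in \besovgen{r}{q}{s}\bra{R}$ (combined with the embedding $\besovgen{r}{q}{s} \hookrightarrow \besovgen{p}{q}{s}$ on the compact $\sphere$, valid since $p \leq r$) forces $\| f - f_{J_n}\|_\Lp^p = O\bra{n^{-sp/d}}$, which is strictly dominated by the target $n^{-sp/(2s+d)}$ because $s > 0$.

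For the stochastic term, the needlet $L^p$ characterisation
\begin{equation*}
\Bigl\| \sumk a_{j,k} \needlet{\cdot} \Bigr\|_\Lp^p \asymp B^{jd\bra{p/2 - 1}} \sumk |a_{j,k}|^p
\end{equation*}
reduces matters (level-by-level, at the cost of a harmless $J_n^{p-1}$ Minkowski factor) to controlling $B^{jd(p/2-1)} \sumk \Ex |\tauthres \betaest - \betac|^p$ for each $j \leq J_n$. Setting $\Theta_j(p) = |\beta_{j,1}|^p + \cdots + |\beta_{j,K_j}|^p$ (a deterministic quantity) and $t_n = B^{dj} n^{-p/2}$, I would split each expectation across the four standard events
\begin{equation*}
A_1 = \bbra{\tauthres = 1,\, \Theta_j(p) \geq t_n/2}, \qquad A_2 = \bbra{\tauthres = 1,\, \Theta_j(p) < t_n/2},
\end{equation*}
\begin{equation*}
A_3 = \bbra{\tauthres = 0,\, \Theta_j(p) \geq 2 t_n}, \qquad A_4 = \bbra{\tauthres = 0,\, \Theta_j(p) < 2 t_n}.
\end{equation*}
On $A_1$ (big coefficients, correctly kept) the sub-Gaussianity of $\noise$ and the $L^2$-normalisation of the needlets yield the linear variance bound $\Ex |\betaest - \betac|^p \lesssim n^{-p/2}$. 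On $A_4$ (small coefficients, correctly discarded) the error at this level equals $\betac$ itself, whose smallness is encoded in $\Theta_j(p) < 2 t_n$.

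The main obstacle is the analysis of the two wrong-decision events $A_2$ and $A_3$, for which I need $\Thetaest$ to concentrate around $\Theta_j(p)$ sharply enough that $\Pr\bra{A_2}$ and $\Pr\bra{A_3}$ decay faster than any polynomial in $n$. Since $\Thetaest$ is a degree-$p$ $U$-statistic weighting products of the sub-Gaussian pairs $\bra{Y_i, \needlet{X_i}}$, my plan is to Hoeffding-decompose it and apply a Bernstein-type inequality for degenerate $U$-statistics, crucially invoking the localisation $\sup_{x \in \sphere} \sumk |\needlet{x}|^2 \lesssim B^{jd}$ together with the uniform bound $|f| \leq M$. A Cauchy--Schwarz splitting of $\Ex\bra{ |\tauthres \betaest - \betac|^p \mathbf{1}_{A_i}}$ against the super-polynomial tails then renders the $A_2, A_3$ contributions negligible compared with the target rate.

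Finally, I would combine the surviving $A_1$ and $A_4$ contributions across $j \in \bbra{0, \ldots, J_n}$ via the standard Besov body split at the critical level $J_1$ defined by $B^{J_1 d} \asymp n^{d/(2s+d)}$. For $j \leq J_1$, the $A_1$ contribution of order $B^{jdp/2} n^{-p/2}$ per level sums geometrically to $O\bra{n^{-sp/(2s+d)}}$. For $j > J_1$, the $A_4$ contribution is summed using the needlet-Besov norm equivalence and the embedding $\besovgen{r}{q}{s} \hookrightarrow \besovgen{p}{\infty}{s}$ (again valid on the compact $\sphere$ because $p \leq r$), again yielding the same rate. Aggregating these with the approximation tail from the first paragraph gives the advertised $n^{-sp/(2s+d)}$ bound uniformly over $\besovgen{r}{q}{s}\bra{R}$.
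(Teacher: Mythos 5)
Your architecture coincides with the paper's: the same bias/stochastic split, the same four events (your $A_1,A_2,A_3,A_4$ are the paper's $Aa, Au, Ua, Uu$), and the same final summation split at the optimal bandwidth $J_s$ with $B^{dJ_s}=n^{1/(2s+d)}$. The genuine gap is in your treatment of the wrong-decision events. You assert that $\Pr\bra{A_2}$ and $\Pr\bra{A_3}$ can be driven below any polynomial in $n$ by a Bernstein-type inequality for degenerate $U$-statistics, and that the contributions are then negligible by Cauchy--Schwarz. This is false, and no concentration inequality can repair it: whenever $\Theta_j\bra{p}$ sits at the same order as the threshold $B^{dj}n^{-p/2}$ (an admissible configuration inside the Besov ball, e.g.\ at bounded $j$ with small low-frequency content), the deviation $\abs{\Thetaest-\Theta_j\bra{p}}\gtrsim B^{dj}n^{-p/2}$ demanded by $A_2$ or $A_3$ is only $O(1)$ standard deviations of $\Thetaest$ --- the \emph{non-degenerate} first-order Hoeffding projection, of standard deviation $\asymp n^{-1/2}\Theta_j\bra{p}^{1-1/p}$, dominates there, so these probabilities are genuinely of constant order. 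The paper never claims they are small. What it proves and uses are the interpolating polynomial moment bounds of Propositions~\ref{prop:thres} and~\ref{prop:threscent}: for $Ua$ the product (not the probability alone) is controlled,
\begin{equation*}
\Theta_j\bra{p}\,\Pr\bbra{\abs{\Thetaest-\Theta_j\bra{p}}\geq \tfrac{1}{2}\Theta_j\bra{p}}\leq \Theta_j\bra{p}\sum_{h=1}^{p}\bra{\frac{B^{dj}n^{-p/2}}{\Theta_j\bra{p}}}^{mh/p}\leq C\,\frac{B^{dj}}{n^{p/2}}
\end{equation*}
with $m=p$, and $\sum_{j\leq J_s}B^{dj(p/2-1)}B^{dj}n^{-p/2}\asymp n^{-sp/(2s+d)}$ even though the individual probabilities need not decay at all; for $Au$ one applies H\"older and Markov with the \emph{uncentered} $m$-th moment of $\Thetaest$, and the geometric decay in $j>J_s$ is extracted from the combinatorial bookkeeping over $\Gamma_{m,p}$ with $m>1+\alpha p/2$. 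Your proposal replaces this, which is the quantitative heart of the proof, with a qualitative claim that fails precisely in the critical regime.

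Two further omissions: the statistic $\Thetaest$ is only defined for $p$ an even integer, so your argument cannot cover general $p\in[1,r]$ without the interpolation $\Thetaest=\{\hat{\Theta}_j\bra{p_1}\}^{\delta}\{\hat{\Theta}_j\bra{p_2}\}^{1-\delta}$ and Lemma~\ref{lemma:central}, which you do not mention; and the two-sided equivalence $\|\sum_k a_{j,k}\psi_{j,k}\|_{\Lp}^p\asymp B^{jd(p/2-1)}\sum_k\abs{a_{j,k}}^p$ you invoke is only available as an upper bound (Eq.~\eqref{eqn:boundsuppa}), which happens to suffice but should be stated as such.
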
	

The behavior of the $L^\infty$-risk function will be studied separately in Section~\ref{sec:global} and the analogous result is described in Theorem \ref{thm:infty}. Moreover, the details concerning the choice of $r$ will be presented in Remark \ref{rem:pho2} and other properties of $L^p$-risk functions, such as optimality, will be discussed in Remark \ref{rem:remark}.

\subsection{Comparison with other results}\label{sub:comparison}

The bound given in Eq.~\eqref{eqn:main} is consistent with the results of Kerkyacharian et al. \cite{kpt96}, where global thresholding techniques were introduced on $\reals$. As far as nonparametric inference over spherical datasets is concerned, our results can be viewed as an alternative proposal to the existing nonparametric regression methods (see, e.g., \cite{bkmpAoSb, durastanti2, dmg, monnier}), related to the local and block thresholding procedures. 

Recall that in local thresholding paradigm, each empirical estimator $\betaest$ is compared to a threshold $\tau_{j,k}$ and it is, therefore, kept or discarded if its absolute value is above or below $\tau_{j,k}$ respectively, i.e., the threshold function is given by $\mathbbold{1} \{ |\betaest|\geq \tau_{j,k}\}$. Typically, the threshold is chosen such that $\tau_{j,k} = \kappa \bra{\ln n/n}$, where $\kappa$ depends explicitly on two parameters, namely, the radius $R$ of the Besov ball on which the function $f$ is defined and its supremum $M$; see, e.g., Baldi et al. \cite{bkmpAoSb}. An alternative and partially data-driven choice for $\kappa$ is proposed by Monnier \cite{monnier}, i.e., here 
$$
\kappa= \frac{\kappa_0}{n} \sum_{i=1}^n \needlet{X_i}^2. 
$$
Even if this stochastic approach is proved to outperform the deterministic one, the threshold still depends on both $R$ and $M$, which control $\kappa_0$. Also according to the results established on $\reals$ (see H\"ardle et al. \cite{WASA}), local techniques entail nearly optimality rates for the $L^p$-risks over a wide variety of regularity function spaces. In this case, the regression function $f$ belongs to $\besov\bra{R}$, where $s \geq d/r$, $p \in \bbra{1,\infty}$, $q \in \bbra{1,\infty}$ and $0<R<\infty$ (cf. \cite{bkmpAoSb,dmg,WASA}). However, these adaptive rates of convergence are achieved on the expense of having an extra logarithmic term and of requiring explicit knowledge of the radius of the Besov balls on which $f$ is defined, in order to establish an optimal threshold.

As far as the block thresholding is concerned, for any fixed resolution level this procedure collects the coefficients ${\hat \beta}_{j,1}, \ldots, {\hat \beta}_{j,K_j}$ into $\ell= \ell \bra{n}$ blocks denoted $B_{j,1}, \ldots, B_{j,\ell}$ of dimension depending on the sample size. Each block is then compared to a threshold and then it is retained or discarded. This method has exact convergence rate (i.e., without the logarithmic extra term), although it requires explicit knowledge of the Besov radius $R$. Furthermore, the estimator is adaptive only over a narrower subset of the scale of Besov spaces, the so-called regular zone; see H\"ardle et al. \cite{WASA}. The construction of blocks on $\sphere$ can also be a difficult procedure, as it requires a precise knowledge of the pixelization of the sphere, namely, the structure of the subregions on which the sphere is partitioned, in order to build spherical wavelets.

On the other hand, the global techniques presented in this paper do not require any knowledge regarding the radius of Besov ball and have exact optimal convergence rates even over the narrowest scale of regularity function spaces. 

\subsection{Plan of the paper}\label{sub:plan}

This paper is organized as follows. Section~\ref{sec:prel} presents some preliminary results, such as the construction of spherical needlet frames on the sphere, Besov spaces and their properties. In Section~\ref{sec:global}, we describe the statistical methods we apply within the global thresholding paradigm. This section also includes an introduction to the properties of the sub-Gaussian random variables and of the $U$-statistic $\Thetaest$, which are key for establishing the thresholding procedure. Section \ref{sec:numerical} provides some numerical evidence. Finally, the proofs of all of our results are collected in Section~\ref{sec:proofs}.

\section{Preliminaries}\label{sec:prel}

This section presents details concerning the construction of needlet frames, the definition of spherical Besov spaces and their properties. In what is to follow the main bibliographical references are \cite{bkmpAoSb, bkmpAoS, donoho1, gelpes,WASA, tsyb,MaPeCUP, npw2, npw1}.  

\subsection{Harmonic analysis on $\sphere$ and spherical needlets}\label{sub:harm}

Consider the simplified notation $\Ltwo=L^2\bra{\sphere,dx}$, where $dx$ is the uniform Lebesgue measure over $\sphere$. Also, let $\mathcal{H}_\ell$ be the restriction to $\sphere$ of the harmonic homogeneous polynomials of degree $\ell$; see, e.g., Stein and Weiss \cite{steinweiss}. Thus, the following decomposition holds
\begin{equation*}
\Ltwo = \bigoplus_{\ell=0}^\infty \mathcal{H}_\ell.
\end{equation*}

An orthonormal basis for $\mathcal{H}_\ell$ is provided by the set of spherical harmonics $\{ \Y : m=1,\ldots,g_{\ell,d} \}$ of dimension $g_{\ell,d}$ given by
\begin{equation*}
g_{\ell,d}=\frac{\ell+\eta_d}{\eta_d} \binom{\ell+2\eta_d-1}{\ll}, \quad \eta_d =\frac{d-1}{2}. 
\end{equation*}
For any function $f \in \Ltwo$, we define the Fourier coefficients as
\begin{equation*}
a_{\ell,m} :=\int_{\sphere}\overline{\Y}\bra{x} f\bra{x} dx,
\end{equation*}
such that the kernel operator denoting the orthogonal projection over $\mathcal{H}_\ell$ is given, for all $ x \in \sphere$, by
\begin{equation*} 
P_{\ell,d}f \bra{x}=\sum_{m=1}^{g_{\ell,d}} a_{\ell,m}\Y \bra{x}.
\end{equation*} 
Also, let the measure of the surface of $\sphere$ be given by 
\begin{equation*}
\omega_d= 2\pi^{(d+1)/2}{\Big /}{\Gamma\bra{\frac{d+1}{2}}}.
\end{equation*}

The kernel associated to the projector $P_{\ell,d}$ links spherical harmonics to the Gegenbauer polynomial of parameter $\eta_d$ and order $\ell$, labelled by $\mathcal{C}_\ell^{\bra{\eta_q}}$. Indeed, the following summation formula holds 
\begin{equation*}
P_{\ell,d}\bra{x_1,x_2}=\sum_{m=1}^{g_{\ell,d}}\overline{\Y}\bra{x_1}\Y{x_2}=\frac{\ell+\eta_d}{\eta_d \omega_d} \mathcal{C}_\ell^{\bra{\eta_d}}\bra{\langle x_1,x_2\rangle},
\end{equation*}
where $\langle \cdot,\cdot\rangle$ is the standard scalar product on $\reals^{d+1}$; see, e.g., Marinucci and Peccati \cite{MaPeCUP}.

Following Narcowich et al. \cite{npw1}, $\mathcal{K}_\ell = \oplus_{i=0}^{\ell} \mathcal{H} _i$ is the linear space of homogeneous polynomials on $\sphere$ of degree smaller or equal to $\ll$; see also \cite{bkmpAoSb, MaPeCUP,npw2}. Thus, there exist a set of positive cubature points $\mathcal{Q}_\ell\in \sphere$ and a set of cubature weights $\bbra{\lambda_\xi}$,  indexed by $\xi\in \mathcal{Q}_\ll$, such that, for any $f \in \mathcal{K}_\ell$,
\begin{equation*}
\int_{\sphere} f\bra{x}dx= \sum_{\xi \in \mathcal{Q}_\ll }\lambda_\xi f\bra{\xi}.
\end{equation*}

In the following, the notation $a \approx b$ denotes that there exist $c_1, c_2 >0$ such that $c_1 b \leq a \leq c_2 b$. For a fixed resolution level $j$ and a scale parameter $B$, let $K_j=\text{card}\bra{\mathcal{Q}_{\sbra{2B^{j+1}}}}$. Therefore, $\{\xi_{j,k} : k=1,\ldots, K_j\}$ is the set of cubature points associated to the resolution level $j$, while $\{\lambda_{j,k}: k=1,\ldots,K_j \}$ contains the corresponding cubature weights. These are typically chosen such that
$$
K_j \approx B^{d j} \quad \mbox{and} \quad \forall_{k \in \{ 1,\ldots,K_j\} } ~~ \lambda_{j,k} \approx B^{-d j}. 
$$
Define the real-valued weight (or window) function $b$ on $\bra{0,\infty}$ so that
\begin{itemize}
	\item [(i)] $b$ lies on a compact support $\sbra{B^{-1},B}$;
	\item [(ii)] the partitions of unity property holds, namely, $\sum_{j\geq0} b^2(\ell/B^j)=1$, for $\ell \geq B$;
	\item [(iii)] $b \in C^{\rho}\bra{0,\infty}$ for some $\rho\geq 1$.
\end{itemize}

\begin{remark}\label{rem:pho}

Note that $\rho$ can be either a positive integer or equal to $\infty$. In the first case, the function $b(\cdot) $ can be built by means of a standard B-spline approach, using linear combinations of the so-called Bernstein polynomials, while in the other case, it is constructed by means of integration of scaled exponential functions (see also Section \ref{sec:numerical}). Further details can be found in the textbook Marinucci and Peccati \cite{MaPeCUP}.  
\end{remark}

For any $j\geq 0$ and $k \in \{ 1,\ldots,K_j\}$, spherical needlets are defined as
\begin{equation*}
\needlet{x} = \sqrt{\cubew} \, \sum_{\ell \geq 0}{\bfun{\barg}P_{\ell,d}\bra{x,\cubep}}.
\end{equation*}    
Spherical needlets feature some important properties descending on the structure of the window function $b$. Using the compactness of the frequency domain, it follows that $\psi_{j,k}$ is different from zero only on a finite set of frequencies $\ell$, so that we can rewrite the spherical needlets as  
\begin{equation*}\label{eqn:needdef}
\needlet{x} = \sqrt{\cubew} \suml{\bfun{\barg}P_{\ell,d}\bra{x,\cubep}},
\end{equation*}    
where $\Lambdaj=\bbra{u:u \in \left(\sbra{B^{j-1}},\sbra{B^{j+1}}\right)}$ and $\sbra{u}$, $u \in \reals$, denotes the integer part of $u$. From the partitions of unity property, the spherical needlets form a tight frame over $\sphere$ with unitary tightness constant. For $f\in\Ltwo$, 
\begin{equation*}
\norm{f}_{\Ltwo}^2=\sum_{j\geq 0}\sum_{k=1}^{K_j}\abs{\betac}^2,
\end{equation*}
where
\begin{equation}\label{eqn:betacoeff}
\betac=\int_{\sphere} f\bra{x} \needlet{x}dx,
\end{equation}
are the so-called needlet coefficients.
Therefore, we can define the following reconstruction formula (holding in the $L^2$-sense): for all $x \in \sphere$,
\begin{equation*}\label{eqn:rec}
f\bra{x}=\sum_{j\geq 0}\sum_{k=1}^{K_j} \betac \needlet{x}.
\end{equation*}
From the differentiability of $b$, we obtain the following quasi-exponential localization property; for $x \in \sphere$ and any $\eta \in \mathbb{N}$ such that $\eta \leq \rho$, there exists $c_\eta>0$ such that 
\begin{equation}\label{eqn:needloc}
\abs{\needlet{x}}\leq \frac{c_\eta B^{j {d}/{2}}}{\{1+B^{j {d}/{2}}d\bra{x,\xi_{j,k}}\}^\eta},
\end{equation}
where $d\bra{\cdot,\cdot}$ denotes the geodesic distance over $\sphere$.

Roughly speaking, $\abs{\needlet{x}} \approx B^{j{d}/{2}}$ if $x$ belongs to the pixel of area $B^{-dj}$ surrounding the cubature point $\xi_{j,k}$; otherwise, it is almost negligible. The localization result yields a similar boundedness property for the $L^p$-norm, which is crucial for our purposes. In particular, for any $p \in \left[\left.1,\infty\right)\right.$ there exist two constants $c_p, C_p >0$ such that 
\begin{equation}\label{eqn:Lpnorm}
c_p B^{jd\bra{\frac{1}{2}-\frac{1}{p}} }\leq \norm{\psi_{j,k}}_{\Lp} \leq C_p B^{jd\bra{\frac{1}{2}-\frac{1}{p}}},
\end{equation}
and there exist two constants $c_\infty, C_\infty >0$ such that
\begin{equation*}
c_\infty B^{j\frac{d}{2} }\leq \norm{\psi_{j,k}}_{L^\infty\bra{\sphere}} \leq C_\infty B^{j {d}/{2} }.
\end{equation*}

According to Lemma 2 in Baldi et al. \cite{bkmpAoSb}, the following two inequalities hold. For every $0<p\leq\infty$,
\begin{equation}\label{eqn:boundsuppa}
\norm{\sum_{k=1}^{K_j}\betac \psi_{j,k}}_{\Lp} \leq cB^{jd\bra{\frac{1}{2}-\frac{1}{p}}}\norm{\beta_{j,k}}_{\ll_p},
\end{equation}
and for every $1\leq p\leq \infty$,
\begin{equation*}\label{eqn:boundsuppa2}
\norm{\beta_{j,k}}_{\ll_p} B^{jd\bra{\frac{1}{2}-\frac{1}{p}}}\leq c \norm{f}_{\Lp},
\end{equation*}
where $\ll_p$ denotes the space of $p$-summable sequences. The generalization for the case $p=\infty$ is trivial.

The following lemma presents a result based on the localization property.

\begin{lemma}\label{lemma:needloc}
For $x \in \sphere$, let $\needlet{x}$ be given by Eq.~\eqref{eqn:needdef}. Then, for $q \geq 2$,  $k_{i_1} \neq k_{i_2}$, for $i_1 \neq i_2=1,\ldots,q$, and for any $\eta \geq 2$, there exists $C_\eta>0$ such that
\begin{equation*}\label{eqn:needloclemma}
\int_{\sphere} \prod_{i=1}^q \psi_{j,k_i}\bra{x}dx \leq \frac{B^{dj\bra{q-1}}}{\bra{1+B^{dj}\Delta}^{\eta\bra{q-1}}},
\end{equation*}
where 
$$
\Delta=\min _{i_1,i_2 \in \{ 1,\ldots,q\}, i_1\neq i_2} d (\xi_{j,k_{i_1}},\xi_{j,k_{i_2}}).
$$
\end{lemma}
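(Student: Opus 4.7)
The plan is to combine the quasi-exponential localization in Eq.~\eqref{eqn:needloc} with the $L^1$-norm bound in Eq.~\eqref{eqn:Lpnorm}. Passing to absolute values inside the integral, I would first partition $\sphere$ into $q$ Voronoi-type regions
\begin{equation*}
V_i=\bbra{x\in\sphere:d\bra{x,\xi_{j,k_i}}\leq d\bra{x,\xi_{j,k_{i'}}}\ \text{for all}\ i'\neq i},\qquad i=1,\ldots,q,
\end{equation*}
so that $\sphere=V_1\cup\cdots\cup V_q$ up to a set of measure zero.

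On each cell $V_i$, the triangle inequality $d\bra{\xi_{j,k_i},\xi_{j,k_{i'}}}\leq d\bra{\xi_{j,k_i},x}+d\bra{x,\xi_{j,k_{i'}}}$, combined with the defining inequality $d\bra{x,\xi_{j,k_i}}\leq d\bra{x,\xi_{j,k_{i'}}}$, forces $d\bra{x,\xi_{j,k_{i'}}}\geq\Delta/2$ for every $i'\neq i$. I would then split the product into the \emph{peak} factor $\abs{\psi_{j,k_i}\bra{x}}$ and the $q-1$ \emph{far} factors $\abs{\psi_{j,k_{i'}}\bra{x}}$, $i'\neq i$, bounding each far factor pointwise by Eq.~\eqref{eqn:needloc} evaluated at a distance no less than $\Delta/2$; this yields a uniform estimate on $V_i$ depending only on $\Delta$.

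Integrating the resulting pointwise bound over $V_i\subseteq\sphere$ and controlling the surviving peak factor via Eq.~\eqref{eqn:Lpnorm} with $p=1$ gives
\begin{equation*}
\int_{V_i}\prod_{i'=1}^q\abs{\psi_{j,k_{i'}}\bra{x}}\,dx\leq\norm{\psi_{j,k_i}}_{L^1\bra{\sphere}}\cdot\bra{\frac{c_\eta B^{jd/2}}{\bra{1+B^j\Delta/2}^\eta}}^{q-1},
\end{equation*}
and summing over $i=1,\ldots,q$ absorbs the factors $q$, $2^{\eta\bra{q-1}}$, and $c_\eta^{q-1}$ into a single constant $C_\eta>0$, producing the claimed bound (with the understanding that the $B$-power in the numerator comes out at $B^{jd\bra{q-2}/2}$ after combining $\norm{\psi_{j,k_i}}_{L^1}\approx B^{-jd/2}$ with the $q-1$ factors $B^{jd/2}$).

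The main technical subtlety is guaranteeing that all $q-1$ off-peak factors can \emph{simultaneously} be controlled by their tail decay uniformly on each Voronoi cell --- the Voronoi construction together with the triangle-inequality argument above is what makes this possible, and is precisely what forces the denominator exponent $\eta\bra{q-1}$ in the final inequality. A secondary consideration is that $\eta\geq 2$ is needed to render the decay genuinely useful (below this threshold the integration of $\abs{\psi_{j,k_i}}$ near $\xi_{j,k_i}$ no longer dominates), while the implicit upper bound $\eta\leq\rho$ in Eq.~\eqref{eqn:needloc} ties the admissible values of $\eta$ to the regularity of the window function $b$.
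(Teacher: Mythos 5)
Your proof is correct and rests on the same mechanism as the paper's own argument: partition the sphere according to which cubature point $x$ is near, use the triangle inequality to force the remaining $q-1$ needlets to be evaluated at distance at least $\Delta/2$ from their centres, bound those far factors by the localization estimate \eqref{eqn:needloc}, and integrate the single surviving factor via its $L^1$-norm from \eqref{eqn:Lpnorm}. The paper organizes the covering as an iterated peeling of nested sets $S_g=\{x\in S_{g-1}: d(x,\xi_{j,k_g})\geq \Delta/2\}$ rather than your one-shot Voronoi partition, but the two decompositions play the identical role, and your numerator $B^{jd(q-2)/2}$ is at least as sharp as the stated $B^{dj(q-1)}$.
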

\begin{remark}

As discussed in Geller and Pesenson \cite{gelpes} and Kerkyacharian et al. \cite{knp}, needlet-like wavelets can be built over more general spaces, namely, over compact manifolds. In particular, let $\left\{\M,g\right\}$ be a smooth compact homogeneous manifold of dimension $d$, with no boundaries. For the sake of simplicity, we assume that there exists a Laplace--Beltrami operator on $\M$ with respect to the action $g$, labelled by $\Delta_{\M}$. The set $\{\gamma_q: q \geq 0\}$ contains the eigenvalues of $\Delta_{\M}$ associated to the eigenfunctions $\{u_q : q\geq 0\}$, which are orthonormal with respect to the Lebesgue measure over $\M$ and they form an orthonormal basis in $L^2\bra{\M}$; see \cite{gm2,gelpes}. Every function $f \in L^2\bra{\M}$ can be described in terms of its harmonic coefficients, given by $a_q = \langle f, u_q\rangle_{\L^2\bra{\M}}$, so that, for all $x \in \M$,
	\begin{equation*}
	f\bra{x}=\sum_{q\geq 1}a_q u_q \bra{x}.
	\end{equation*}
Therefore, it is possible to define a wavelet system over $\bbra{\M,g}$ describing a tight frame over $\M$ along the same lines as in Narcowich et al. \cite{npw1} for $\sphere$; see also \cite{gelpes,knp,pesenson} and the references therein, such as Geller and Mayeli \cite{gm1,gm2}. Here we just provide the definition of the needlet (scaling) function on $\M$, given by 
$$
	\needlet{x} =\sqrt{\cubew}\sum_{q=B^{j-1}}^{B^{j+1}} \bfun{\frac{\sqrt{-\eigen_q}}{B^{j}}}u_q\bra{x}\bar{u}\bra{\xi_{j,k}},
$$
where in this case the set $\bbra{\cubep,\cubew}$ characterizes a suitable partition of $\M$, given by a $\varepsilon$-lattice on $\M$, with $\varepsilon=\sqrt{\cubew}$. Further details and technicalities concerning $\varepsilon$-lattices can be found in Pesenson \cite{pesenson}. Analogously to the spherical case, for $f \in L^2\bra{\M}$ and arbitrary $j\geq 0$ and $k \in \{ 1,\ldots, K_j\}$, the needlet coefficient corresponding to $\psi_{j,k}$ is given by
$$
	\betac=\langle f, \psi_{j,k} \rangle_{\Ltwo} =\sqrt{\cubew}\sum_{q=B^{j-1}}^{B^{j+1}} \bfun{\frac{\sqrt{-\eigen_q}}{B^{j}}}a_q u_{q}\bra{\xi_{j,k}}.
$$
These wavelets preserve all the properties featured by needlets on the sphere: because, as shown in the following sections, the main results presented here do not depend strictly on the underlying manifold (namely, the sphere) but rather they can be easily extended to more general frameworks such as compact manifolds, where the concentration properties of the wavelets and the smooth approximation properties of Besov spaces still hold.
\end{remark}

\subsection{Besov space on the sphere}\label{sub:besov}

Here we will recall the definition of spherical Besov spaces and their main approximation properties for wavelet coefficients. We refer to \cite{bkmpAoSb,dmg,WASA,npw2} for more details and further technicalities.

Suppose that one has a scale of functional classes $\mathcal{G}_t$, depending on the $q$-dimensional set of parameters $t \in T \subseteq \reals^q$. The approximation error $G_t\bra{f;p}$ concerning the replacement of $f$ by an element $g \in \mathcal{G}_t$ is given by
\begin{equation*}
G_t\bra{f;p}=\inf_{g \in \mathcal{G}_t}\norm{f-g}_\Lp.
\end{equation*}
Therefore, the Besov space $\besov$ is the space of functions such that $f \in \Lp$ and 
\begin{equation*}
\sum_{t\geq 0}\frac{1}{t} \{ t^s G_t\bra{f;p}\}^q <\infty,
\end{equation*}
which is equivalent to
\begin{equation*}
\sum_{j\geq 0}B^j\{G_{B^j}\bra{f;p}\}^q <\infty.
\end{equation*}
The function $f$ belongs to the Besov space $\besov$ if and only if 
\begin{equation}\label{eqn:bbes}
\left[ \sum_{k=1}^{K_j}\{\abs{\betac}\norm{\psi_{j,k}}_{\Lp}\}^p\right]^{{1}/{p}} = B^{-js} w_j,
\end{equation}
where $w_j\in \ell_q$, the standard space of $q$-power summable infinite sequences.
Loosely speaking, the parameters $s\geq 0$, $1\leq p \leq \infty$ and $1\leq q \leq \infty$ of the Besov space $\besov$ can be viewed as follows: given $B>1$, the parameter $p$ denotes the $p$-norm of the wavelet coefficients taken at a fixed resolution $j$, the parameter $q$ describes the weighted $q$-norm taken across the scale $j$, and the parameter $r$ controls the smoothness of the rate of decay across the scale $j$. In view of Eq.~\eqref{eqn:Lpnorm}, the Besov norm is defined as 
\begin{align*}
\norm{f}_{\besov} = & \norm{f}_{\Lp} +\left\{\sum_{j\geq 0} B^{jq\{s+d\bra{{1}/{2}-{1}/{p}}\}}\bra{\sum_{k=1}^{K_j}\abs{\betac}^p}^{{q}/{p}}\right\}^{ {1}/{q}}\\
=& \norm{f}_{\Lp} +\norm{ B^{j\{s+d\bra{{1}/{2}- {1}/{p}}\}} \norm{\betac}_{\ell_p}}_{\ell_q},
\end{align*}
for $q\geq1$. The extension to the case $q=\infty$ is trivial. 

We conclude this section by introducing the Besov embedding, discussed in \cite{bkmpAoSb, kerkypicard,kpt96} among others. For $p<r$, one has 
$$
\besovgen{r}{q}{s} \subset \besovgen{p}{q}{s} \quad \mbox{and} \quad \besovgen{p}{q}{s} \subset \besovgen{r}{q}{s-d\bra{ {1}/{p}- {1}/{r}}}, 
$$
or, equivalently,
\begin{align}
\sum_{k=1}^{K_j} \abs{\betac}^p \leq &\bra{\sum_{k=1}^{K_j} \abs{\betac}^r} K_j^{1-{p}/{r}};\label{eqn:besovmin}\\
\sum_{k=1}^{K_j} \abs{\betac}^r \leq & \sum_{k=1}^{K_j} \abs{\betac}^p.\label{eqn:besovmax}
 \end{align}
Proofs and further details can be found, for instance, in \cite{bkmpAoSb, dmg}. 

\section{Global thresholding with spherical needlets}\label{sec:global}

This section provides a detailed description of the global thresholding technique applied to the nonparametric regression problem on the $d$-dimensional sphere. We refer to \cite{efrom, WASA,kpt96} for an extensive description of global thresholding methods and to \cite{bkmpAoSb,dmg} for further details on nonparametric estimation in the spherical framework.   

\subsection{The regression model}\label{sub:regression}

Recall the regression model given by Eq.~\eqref{eqn:regression}, i.e., for all $i \in \{1, \ldots, n\}$,
\begin{equation*}
Y_i= f\bra{X_i} + \noise.
\end{equation*}
While $\Xset$ denotes the set of uniformly sampled random directions over $\sphere$, $\Yset$ is the set of the independent observations which are related to $\Xset$ through the regression function $f$ and affected by $\noiseset$, which is the set of the observational errors. The independent and identically distributed random variables $\varepsilon_1, \ldots , \varepsilon_n$ are such that, for all $i \in \{ 1, \ldots , n\}$,
\begin{equation*}
\Ex\bra{\noise}=0, \quad \Ex\bra{\noise^2}=\sigma_\varepsilon^2 < \infty, \label{eqn:varnoise}
\end{equation*}
and they are assumed to be sub-Gaussian. Further details are given in Section~\ref{sub:noise}.
Assume that $f \in \besov$, $d/p\leq s < r+1$, $1\leq p \leq r$ and $1\leq q\leq \infty$, where $r$ is fixed, and that there exists $R>0$ such that $\norm{f}_{\besov} \leq R$.  
As mentioned in Section \ref{sub:main} and Section \ref{sec:prel}, the regression function can be expanded in terms of needlet coefficients as 
\begin{equation*}
f\bra{x}=\sum_{j \geq 0} \sum_{k=1}^{K_j} \betac \needlet{x},
\end{equation*}
where $\betac$ are given in Eq. \eqref{eqn:betacoeff}. 

\begin{remark}\label{rem:pho2}
As discussed in Section \ref{sub:comparison}, we do not require explicit knowledge of the Besov radius $R$. Although it can be difficult to determine $r$ explicitly, we suggest the following criterion. Consider Remark \ref{rem:pho}; if $\rho < \infty$, we choose $r=\rho$ (see again \cite{kpt96}). If $\rho=\infty$, we choose $r=B^{d\bra{J_n+1}}$ empirically, using the so-called vanishing moment condition on $\sphere$, properly adapted for the needlet framework; see, e.g., Schr\"oder and Sweldens \cite{schro}.
\end{remark}

\subsection{The observational noise}\label{sub:noise}

Following Durastanti et al. \cite{dmg}, we assume that $\varepsilon_1, \ldots, \varepsilon_n$ follow a sub-Gaussian distribution; see also Buldygin and Kozachenko \cite{buko}). 
A random variable $\varepsilon$ is said to be sub-Gaussian of parameter $a$ if, for all $\lambda \in \reals$, there exists $a\geq 0$ such that
\begin{equation*}
\Ex (e^{\lambda \varepsilon}) \leq e^{{a^2\lambda^2}/{2}} .
\end{equation*} 
Sub-Gaussian random variables are characterized by the sub-Gaussian standard, given by
\begin{equation*}
\zeta\bra{\varepsilon} := \inf\bbra{a \geq 0: \Ex (e^{\lambda \varepsilon}) \leq e^{{a^2\lambda^2}/{2}} , \lambda \in \reals},
\end{equation*}
which is finite. As proved in \cite{buko},
$$
\zeta\bra{\varepsilon} =  \sup_{\lambda \neq 0} \bbra{\frac{2\ln \Ex \bra{ e^{\lambda\varepsilon}}}{\lambda^2}}^{{1}/{2}};\quad \Ex (e^{\lambda \varepsilon}) \leq  \exp \left\{\frac{\lambda^2 \zeta^2\bra{\varepsilon}}{2}\right\}.
$$
Following Lemma 1.4 in \cite{buko}, for $p>0$
\begin{align*}
\Ex\bra{\varepsilon}=0;\ \Ex\bra{\varepsilon^2}\leq \zeta\bra{\varepsilon}; \ \Ex\bra{\abs{\varepsilon}^p} \leq 2 \bra{\frac{p}{\exp}}^{{p/2}} \zeta^p\bra{\varepsilon}.
\end{align*}

Therefore, sub-Gaussian random variables are characterized by the same moment inequalities and concentration properties featured by null-mean Gaussian or bounded random variables. 
\begin{remark}In order to establish the probabilistic bounds described in Sections \ref{sub:est} and \ref{sub:thres}, it would be sufficient for $\varepsilon_1, \ldots, \varepsilon_n$ to be null-mean independent random variables with finite absolute $p$th moment. However, we include the notion of sub-Gaussianity in order to be consistent with the existing literature; see \cite{dmg}. Furthermore, sub-Gaussianity involves a wide class of random variables, including Gaussian and bounded random variables and, in general, all the random variables such that their moment generating function has a an upper bound in terms of the moment generating function of a centered Gaussian random variable of variance $a$. Hence the term ``sub-Gaussian." 
\end{remark}

\subsection{The estimation procedure}\label{sub:est}

We note again that the method established here can be viewed as an extension of global thresholding techniques to the needlet regression function estimation. In this sense, our results are strongly related to those presented in \cite{bkmpAoSb, dmg, kpt96}, as discussed in Section \ref{sub:comparison}.

For any $j \geq 0$ and $k \in \{ 1,\ldots, K_j\}$, the empirical needlet estimator is given by
\begin{equation*}
\betaest=\frac{1}{n} \sum_{i=1}^{n} Y_i \needlet{X_i}, 
\end{equation*} 
and it is unbiased, i.e.,
\begin{equation*}
\Ex (\betaest) =\frac{1}{n} \sum_{i=1}^{n} [\Ex \{f\bra{X_i} \needlet{X_i} \} + \Ex (\varepsilon_i) \Ex\{ \needlet{X_i}\}]=\betac. 
\end{equation*} 
The empirical needlet coefficients are moreover characterized by the following stochastic property.
\begin{proposition}\label{prop:betac}
	Let $\betac$ and $\betaest$ be as in Eq.~\eqref{eqn:betacoeff} and Eq.~\eqref{eqn:needest}, respectively. Thus, for $p\geq 1$, there exists $\tilde{c}_p$ such that
	\begin{equation*}\label{eqn:betamoment}
	\Ex ( | {\hat \beta}_{j,k} - \beta_{j,k}|^p) \leq \tilde{c}_p n^{-{p}/{2}}.
	\end{equation*}
\end{proposition}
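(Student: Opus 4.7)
The plan is to rewrite the centred empirical coefficient as a normalised sum of i.i.d.\ mean-zero random variables and then invoke a Rosenthal-type moment inequality. Setting $Z_i := Y_i\psi_{j,k}(X_i) - \beta_{j,k}$, the independence of the pairs $(X_i,Y_i)$, the zero mean of $\varepsilon_i$, and the identity $\Ex\{f(X_i)\psi_{j,k}(X_i)\} = \beta_{j,k}$ coming from \eqref{eqn:betacoeff} show that $Z_1,\ldots,Z_n$ are i.i.d.\ with $\Ex Z_i=0$ and $\hat\beta_{j,k}-\beta_{j,k}=n^{-1}\sum_{i=1}^n Z_i$.

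For $1\le p\le 2$ the statement reduces to the $p=2$ case by Jensen's inequality. For $p=2$ it is a one-line variance computation: using \eqref{eqn:boundf} to control $f$, the independence of $X_i$ and $\varepsilon_i$, the finite variance of $\varepsilon_i$ recalled in Section~\ref{sub:noise}, and the identity $\Ex\{\psi_{j,k}(X_i)^2\} = \omega_d^{-1}\|\psi_{j,k}\|_{L^2(\sphere)}^2 \le C$ from \eqref{eqn:Lpnorm} at exponent $2$, I obtain $\Ex Z_1^2 \le C(M^2+\sigma_\varepsilon^2)$, and the i.i.d.\ variance identity closes this case.

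For $p>2$ I would apply Rosenthal's inequality,
$$
\Ex\Bigl|\sum_{i=1}^n Z_i\Bigr|^p \le C_p\bigl\{n^{p/2}(\Ex Z_1^2)^{p/2} + n\,\Ex|Z_1|^p\bigr\}.
$$
The $p$th moment of $Z_1$ is controlled by splitting $Y_i=f(X_i)+\varepsilon_i$ and combining the sup-norm bound on $f$ with the sub-Gaussian moment estimate $\Ex|\varepsilon_i|^p \le 2(p/e)^{p/2}\zeta(\varepsilon)^p$ recalled in Section~\ref{sub:noise}, together with the integrated localisation estimate $\Ex|\psi_{j,k}(X_i)|^p = \omega_d^{-1}\|\psi_{j,k}\|_{L^p(\sphere)}^p \lesssim B^{jd(p/2-1)}$ supplied by \eqref{eqn:Lpnorm}. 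Dividing the Rosenthal bound by $n^p$, the Gaussian (variance) part yields $C_p n^{-p/2}$ directly; the Poissonian contribution equals $C_p n^{1-p} B^{jd(p/2-1)}$, which is of the same order $n^{-p/2}$ precisely when $B^{jd}\le n$, that is, for $j\le J_n=\ln_B n^{1/d}$, the range over which $\fest$ in \eqref{eqn:fest} is actually constructed.

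The main obstacle is that $\Ex|Z_1|^p$ inflates with $j$ as $B^{jd(p/2-1)}$ for $p>2$, so a crude Marcinkiewicz--Zygmund estimate applied to $\sum_{i=1}^n Z_i$ would fail to deliver a clean $n^{-p/2}$ rate with a constant independent of $j$. Rosenthal's inequality is what separates the Gaussian and the heavy-tail contributions, and the calibration of the truncation bandwidth $J_n$ is exactly what keeps the second (Poissonian) term harmless.
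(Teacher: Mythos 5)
Your proposal is correct and follows essentially the same route as the paper's own proof: reduction to the $p=2$ case by Jensen/convexity for $p\le 2$, Rosenthal's inequality for $p>2$, the needlet norm bound \eqref{eqn:Lpnorm} to get $\Ex|Z_1|^p\lesssim B^{jd(p/2-1)}$, and the calibration $B^{dj}\le n$ to absorb the Poissonian term into $n^{-p/2}$. Your closing remark about why Rosenthal (rather than a cruder moment bound) is needed matches the implicit logic of the paper's argument.
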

Therefore, we define the global thresholding needlet regression estimator at every $x \in \sphere$ by
\begin{equation*}
{\hat f}_n\bra{x} =\sum_{j = 0}^{J_n}\tauthres \sum_{k} \betaest \needlet{x};
\end{equation*}
see Eq.~\eqref{eqn:fest}. 
Recall now the main results, stated in Section \ref{sec:intro}. 
\begin{theorem*}
Given $r\in\bra{1,\infty}$, let $p \in [1,r]$. Also, let ${\hat f}_n$ be given by Eq.~\eqref{eqn:fest}, with $J_n=\ln_B n^{1/d}$. Then, for $1\leq q \leq \infty$,  $d/p\leq s < r+1$ and $0<R<\infty$, there exists $C>0$ such that 
\begin{equation}\label{eqn:main}
\sup_{f \in \besovgen{r}{q}{s}\bra{R}} \Ex\Bigl( \| {\hat f}_n - f\|_\Lp^p \Bigr)\leq C n^{\frac{-sp}{2s+d}} .
\end{equation}
\end{theorem*}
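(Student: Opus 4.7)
The plan is to carry out a bias--variance analysis in the spirit of the global-thresholding scheme of \cite{kpt96}, adapted to the needlet framework. First, I would split
$$
\|\fest - f\|_{\Lp}^p \leq 2^{p-1}\bra{\|\fest - P_{J_n}f\|_{\Lp}^p + \|P_{J_n}f - f\|_{\Lp}^p},
$$
where $P_{J_n}f = \sum_{j\leq J_n}\sum_{k}\betac\psi_{j,k}$. For the deterministic tail I would apply \eqref{eqn:boundsuppa} level by level together with the Besov characterization \eqref{eqn:bbes} and the embedding $\besovgen{r}{q}{s}\subset\besovgen{p}{q}{s}$ (valid since $p\leq r$); this yields $\|P_{J_n}f - f\|_{\Lp}^p \leq C B^{-J_n s p} = Cn^{-sp/d}$, which is absorbed into the target rate since $sp/d > sp/(2s+d)$.

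For the stochastic error I would rewrite each kept resolution level as
$$
\tauthres\sum_k(\betaest - \betac)\psi_{j,k} - (1 - \tauthres)\sum_k\betac\psi_{j,k},
$$
apply \eqref{eqn:boundsuppa} inside the $L^p$-norm, and partition each expectation across the four events determined by whether $\Thetaest$ is above or below $B^{dj}n^{-p/2}$ and whether $\Theta_j := \sum_k|\betac|^p$ is above or below a fixed constant multiple $\kappa$ of the same threshold. On the \emph{large--large} event ($\tauthres = 1$, $\Theta_j$ large), Proposition~\ref{prop:betac} controls the variance, producing a contribution of order $B^{jd(p/2-1)}\cdot K_j n^{-p/2} = B^{jdp/2}n^{-p/2}$. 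On the \emph{small--small} event ($\tauthres = 0$, $\Theta_j$ small), the contribution is deterministic: using the defining inequality $\Theta_j<\kappa B^{dj}n^{-p/2}$ again gives $B^{jdp/2}n^{-p/2}$, while plugging the pure Besov bias from \eqref{eqn:bbes} gives $CB^{-jsp}w_j^p$, and one would keep the smaller of the two in each $j$. The remaining two \emph{mismatch} events are treated by Cauchy--Schwarz, pairing a deviation probability $\Pr(|\Thetaest - \Theta_j| \geq \kappa^{-1}B^{dj}n^{-p/2})$ with a raw $2p$-th moment bound on the corresponding $L^p$-block.

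The error is then summed across $j$ by introducing a critical level $j_\star$ via $B^{dj_\star}\simeq n^{d/(2s+d)}$, chosen so that $B^{j_\star dp/2}n^{-p/2} = B^{-j_\star sp} = n^{-sp/(2s+d)}$. For $j \leq j_\star$ the stochastic bound is the smaller of the two and the geometric series is dominated by its last term, giving the target rate. For $j_\star < j \leq J_n$ the Besov bias bound is tighter; to extract the geometric decay I would invoke \eqref{eqn:besovmin} to convert $\ell_p$ sums into $\ell_r$ sums of coefficients (possible because $p\leq r$), substitute the Besov characterization of $f\in\besovgen{r}{q}{s}(R)$, and sum via H\"older in the $\ell_q$ index to obtain $\sum_{j>j_\star}B^{-jsp}w_j^p \lesssim B^{-j_\star sp} = n^{-sp/(2s+d)}$. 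The hypothesis $s\geq d/p$ is what keeps all intermediate geometric series on the right side of the summability boundary.

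The main obstacle will be the $U$-statistic concentration inequality
$$
\Pr\bra{|\Thetaest - \Theta_j| \geq c\, B^{dj}n^{-p/2}} \leq C n^{-\nu}
$$
with an exponent $\nu$ large enough to render the two mismatch contributions negligible against the target rate. Because $\Thetaest$ is a degree-$p$ polynomial $U$-statistic built on a subsample of observations affected by sub-Gaussian noise, its Bernstein-type tail requires a delicate interplay between moments of $Y_i\psi_{j,k}(X_i)$ and the sharp needlet localization \eqref{eqn:needloc}; the even-integer restriction on $p$ and the explicit form $B^{dj}n^{-p/2}$ of the threshold are chosen precisely so that this concentration matches the desired rate. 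This is the technical core deferred to Section~\ref{sub:thres}; once it is in hand, the remainder of the argument reduces to the geometric-series bookkeeping sketched above.
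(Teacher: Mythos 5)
Your proposal follows essentially the same route as the paper: the bias/stochastic split with $B^{dJ_n}=n$, the four-way partition of each level according to whether $\Thetaest$ and $\Theta_j(p)$ sit above or below (constant multiples of) the threshold $B^{dj}n^{-p/2}$, the critical level $B^{dj_\star}\simeq n^{d/(2s+d)}$ balancing $B^{j_\star dp/2}n^{-p/2}$ against $B^{-j_\star sp}$, Proposition~\ref{prop:betac} on the concordant ``large--large'' event, the Besov characterization on the ``small--small'' event, and H\"older/Markov paired with the $U$-statistic moment bounds of Propositions~\ref{prop:thres} and~\ref{prop:threscent} on the two mismatch events. This is exactly the paper's $Aa$, $Uu$, $Au$, $Ua$ decomposition, and your bookkeeping of the geometric series on either side of $j_\star$ matches the paper's.

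There is one genuine gap. The moment bounds you defer to Section~\ref{sub:thres} hold only for \emph{even integer} $p$ (the $U$-statistic $\Thetaest$ is a degree-$p$ product and its combinatorial moment analysis requires $p\in 2\mathbb{N}$), yet the theorem asserts the rate for every real $p\in[1,r]$. Your sketch never bridges this: you note the even-integer restriction as a design feature of the threshold but then sum over $j$ as if the concentration inequality were available for arbitrary $p$. The paper closes this gap by interpolation (Remark~\ref{remark:interpole}): writing $p=\delta p_1+(1-\delta)p_2$ with $p_1,p_2$ even and $p_2=p_1+2$, setting $\Thetaest=\{\hat{\Theta}_j(p_1)\}^{\delta}\{\hat{\Theta}_j(p_2)\}^{1-\delta}$, and re-running the two mismatch bounds --- $Au_2$ via H\"older applied to $\Ex[\{\hat{\Theta}_j(p)\}^m]\leq C\,\Ex[\{\hat{\Theta}_j(p_1)\}^m]^{\delta}\,\Ex[\{\hat{\Theta}_j(p_2)\}^m]^{1-\delta}$ together with the embeddings \eqref{eqn:besovmin}--\eqref{eqn:besovmax}, and $Ua_1$ via the dedicated Lemma~\ref{lemma:central} controlling $\Ex\{|n^{-1}\hat{\Theta}_j(p_0)-\Theta_j(p_0')|^m\}$. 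Without some such device your argument proves the theorem only for $p\in\{2,4,\ldots\}$; you should either add the interpolation step or restrict the claim accordingly.
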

\noindent In the nonparametric thresholding settings, the $L^p$-risk is generally bounded as follows 
\begin{eqnarray*}\label{eqn:stochbias} 
\notag \Ex\bra{\| {\hat f}_n - f\|_\Lp^p} &\leq & 2^{p-1}\left\{\Ex\bra{\Bigl\| \sum_{j=0}^{J_n}(\betaest-\betac)\psi_{j,k} \Bigr\|_\Lp^p}
\right.	\\
&& \qquad +\left.\norm{\sum_{j\geq J_n}\betac\psi_{j,k}}_\Lp^p\right\} \notag\\
&=& \mathfrak{S} +\mathfrak{B},
\end{eqnarray*}
where $\mathfrak{S}$ is the stochastic error, due to the randomness of the observations and $\mathfrak{B}$ is the (deterministic) bias error.
The so-called truncation level $J_n$ is chosen so that $B^{dJ_n}=n$. 

In this case the bias term term does not affect the rate of convergence for $s\in \bra{d/p,r+1}$. As far as $\mathfrak{S}$ is concerned, its asymptotic behavior is established by means of the so-called optimal bandwidth selection, i.e., a frequency $J_s$ such that $B^{dJ_s}=n^{\frac{1}{\bra{2s+d}}}$; see \cite{efrom,kpt96}. Note that trivially, $J_s < J_n$. The meaning and the relevance of the optimal bandwidth selection is given in Section~\ref{sec:proofs}, in the proof of Theorem \ref{thm:main}. However, in the next section it will also be crucial for the construction of the threshold function.

Consider now the case $p=\infty$. First, we have to modify the threshold function given in Eq.~\eqref{eqn:threshold} slightly, in view of the explicit dependence on $p$. Hence, in the selection procedure we use the statistic $\widehat{\Theta}_j^\infty=\widehat{\Theta}_j \bra{1} =  |\hat \beta_{j,1}| + \cdots + |{\hat \beta}_{j,K_j}|,$ which will be compared to $B^{dj}n^{-1/2}$. Further details on the threshold will also be given in the next section. Under this assumption, we obtain the following result.
\begin{theorem}\label{thm:infty}
	Let $\fest$ by given by Eq.~\eqref{eqn:fest}. Given $r\in \bra{1, \infty}$, for any $d < s < r+1$, there exists $C>0$ such that 
	\begin{equation*}
		\sup_{f \in \besovgen{r}{q}{s}} \Ex \bra{\| {\hat f}_n - f\|_{L^\infty\bra{\sphere}}}\leq Cn^{-\frac{s-d}{2s+d}}.
	\end{equation*} 
\end{theorem}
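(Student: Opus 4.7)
The plan is to adapt the argument of Theorem~\ref{thm:main} to the $L^\infty$ setting, replacing the statistic $\Thetaest$ by its $p=1$ analogue $\widehat{\Theta}_j^\infty$ and the $L^p$ machinery by needlet concentration in the sup norm. I would start from the triangle decomposition
$$
\| \fest - f \|_{L^\infty(\sphere)} \le \mathfrak{B} + \mathfrak{S}_1 + \mathfrak{S}_2,
$$
with $\mathfrak{B}=\| \sum_{j > J_n} \sum_k \betac \psi_{j,k}\|_{L^\infty}$ the truncation bias, $\mathfrak{S}_1=\| \sum_{j=0}^{J_n} \tauthres \sum_k (\betaest-\betac) \psi_{j,k}\|_{L^\infty}$ the stochastic error on retained levels, and $\mathfrak{S}_2=\| \sum_{j=0}^{J_n} (1-\tauthres) \sum_k \betac \psi_{j,k}\|_{L^\infty}$ the bias contribution of incorrectly discarded levels.

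For $\mathfrak{B}$ I would combine the Besov embedding $\besovgen{r}{q}{s}\subset\besovgen{\infty}{q}{s-d/r}$ with Eq.~\eqref{eqn:boundsuppa} at $p=\infty$, obtaining $\| \sum_k \betac \psi_{j,k}\|_{L^\infty}\lesssim B^{-j(s-d/r)}$. Since $B^{dJ_n}=n$ and $s>d\ge d/r$ for $r\ge 1$, this summable tail satisfies $\mathfrak{B}\lesssim n^{-(s-d/r)/d}$, which under the standing condition $d<s<r+1$ is dominated by the target rate $n^{-(s-d)/(2s+d)}$.

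The heart of the argument is the control of $\mathfrak{S}_1+\mathfrak{S}_2$. Introducing an oracle resolution $J_\star$ calibrated so as to balance variance and Besov-driven bias in the claimed rate, I would split both sums according to $j\le J_\star$ and $J_\star<j\le J_n$. For the low-frequency part, the quasi-exponential localization~\eqref{eqn:needloc} ensures that at each $x\in\sphere$ only $O(1)$ needlets contribute effectively, yielding $\|\sum_k(\betaest-\betac)\psi_{j,k}\|_{L^\infty}\le c B^{jd/2}\max_k|\betaest-\betac|$; expectation of this maximum is then controlled via the sub-Gaussian moment bounds of Proposition~\ref{prop:betac} and a standard maximal inequality over the $K_j\asymp B^{dj}$ coefficients. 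For the high-frequency part, I would condition on the thresholding event $\{\widehat{\Theta}_j^\infty\ge B^{dj}n^{-1/2}\}$: the concentration of the $U$-statistic $\widehat{\Theta}_j$ established in Section~\ref{sub:thres} makes the probability of an erroneous retention or discard polynomially small in $B^j$, while on the complementary event either $\mathfrak{S}_2$ is absorbed by the Besov tail $B^{-j(s-d/r)}$ or $\mathfrak{S}_1$ inherits the matching decay.

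The main obstacle will be absorbing the $\sqrt{\log K_j}\asymp\sqrt{j}$ factor produced by the supremum of the sub-Gaussian deviations at scales close to $J_\star$, so as to end up with a clean polynomial rate. This will require leveraging the threshold to rule out retention beyond $J_\star$ except on a super-polynomially small event, which in turn depends on sharp deviation estimates for $\widehat{\Theta}_j^\infty$ around $\sum_k|\betac|$. Obtaining such estimates is delicate, because $\widehat{\Theta}_j^\infty$ is not a centred $U$-statistic but rather an estimator of an $\ell_1$ norm, so a contraction or symmetrisation argument on the absolute values will be needed to reduce matters to the centred concentration machinery of Section~\ref{sub:thres}.
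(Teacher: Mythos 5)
Your skeleton --- truncation bias via the Besov embedding, a case analysis driven by the threshold event $\{\widehat{\Theta}_j^\infty\ge B^{dj}n^{-1/2}\}$ versus the deterministic size of $\Theta_j^\infty$, needlet sup-norm bounds from Eq.~\eqref{eqn:boundsuppa} at $p=\infty$, and a maximal inequality over the $K_j$ coefficients --- coincides in outline with the paper's proof, which splits the stochastic term into the four regions $Aa$, $Au$, $Ua$, $Uu$ exactly as in Theorem~\ref{thm:main}. Your treatment of $\mathfrak{B}$ and of the low-frequency stochastic part is sound.

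The gap is that you leave the one genuinely quantitative ingredient --- a deviation estimate for $\widehat{\Theta}_j^\infty$ around $\Theta_j^\infty$ --- as an open ``delicate'' step, and the machinery you propose for it (super-polynomially small exceptional events, symmetrisation or contraction to reduce to the centred $U$-statistic bounds of Section~\ref{sub:thres}) is both unproven and far heavier than what the theorem requires. Because the target rate $n^{-(s-d)/(2s+d)}$ is deliberately non-optimal (cf.\ Remark~\ref{rem:remark}), there is polynomial slack of order $n^{d/(2s+d)}$ relative to the rate $n^{-s/(2s+d)}$: this absorbs every $\sqrt{\log K_j}\asymp\sqrt{j}$ factor produced by the maximal inequality (the paper even tolerates a factor of $J_s+1$ in its bound for $Aa$), and it means that a plain second-moment bound, $\Ex\bigl(|\widehat{\Theta}_j^\infty-\Theta_j^\infty|^2\bigr)\le C B^{dj}/n$ --- Proposition~\ref{prop:threspuno}, obtained by a direct variance computation on $\sum_k|\hat\beta_{j,k}|$ with no $U$-statistic or symmetrisation input --- combined with Cauchy--Schwarz on $Au$ and Chebyshev on $Ua$ already closes the argument. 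Moreover, in the paper the final rate is dictated not by any of these probabilistic terms but by the purely deterministic contribution $Uu_1\le\sum_{j\le J_s}B^{3dj/2}n^{-1/2}$ from levels that are correctly discarded yet still carry Besov mass; your decomposition into $\mathfrak{S}_1+\mathfrak{S}_2$ never isolates this term, so even after supplying the missing concentration estimate you would not yet see where the exponent $(s-d)/(2s+d)$ comes from.
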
 
\begin{remark} \label{rem:remark}
As far as optimality is concerned, Eq.~\eqref{eqn:main} given in Theorem~\ref{thm:main}, achieves the same optimal minimax rates provided on $\reals$ by Kerkyacharian et al. \cite{kpt96}, where the established optimal rate of convergence was given by $n^{-sp/(2p+1)}$. Moreover, this rate is consistent with the results provided over the so-called regular zone by \cite{bkmpAoSb, durastanti2,dmg,monnier} for local and block thresholding estimates by needlets on the sphere, where the rates are nearly optimal due to the presence of a logarithmic term.

Regarding the $L^\infty$-risk function, according to Theorem \ref{thm:infty}, the rate established  is not optimal; see, e.g., Baldi et al. \cite{bkmpAoSb}. In the global thresholding paradigm, a straightforward generalization of the thresholding function $\Thetaest$ given by Eq.~\eqref{eqn:threshold} is not available (see Remark \ref{rem:infty}). Therefore, the upper bound for the case $p=\infty$ is established in a different framework, which can be reasonably assumed to cause the lack of optimality.   
\end{remark}

\subsection{The threshold}\label{sub:thres}

The construction of the threshold function $\tau_j$ is strictly related to Efromovich \cite{efrom} and Kerkyacharian et al. \cite{kpt96}, where analogous results were established in the real case. Let
\begin{equation*}\label{eqn:thetaj}
\Theta_j\bra{p}=\sum_{k=1}^{K_j}\abs{\betac}^p.
\end{equation*}
Using Eq.~\eqref{eqn:bbes}, it follows immediately that, if $f \in \besovgen{p}{q}{s}$,  
\begin{equation*}
\Theta_j\bra{p} \leq C B^{-jp\{s+d(\frac{1}{2}-\frac{1}{p})\}}.
\end{equation*}
Consider now the optimal bandwidth selection $J_s$. If $j\leq J_s$, 
\begin{equation*}
B^{-jp\{s+d(\frac{1}{2}-\frac{1}{p})\}}\geq \frac{B^{dj}}{n^{p/2}}.
\end{equation*}
Thus, even if $j\leq J_s$ doesn't imply $\Theta_j\bra{p} > B^{dj}/n^{{p}/{2}}$, according to \cite{efrom,kpt96}, one has that  $\Theta_j\bra{p}\geq B^{dj}/n^{{p}/{2}}$ implies $j\leq J_s$.

Clearly, the case $\bbra{\Theta_j\bra{p}\leq B^{dj}/n^{{p}/{2}}, j \leq J_s}$ provides no guarantee of a better performance if compared to the linear estimate, whose error is of order $B^{dj}/n^{{p}/{2}}$; see H\"ardle et al. \cite{WASA}. Thus, the natural choice is to construct a threshold function that keeps the level $j$ if and only if
\begin{equation*}
\Theta_j \bra{p}\geq \frac{B^{dj}}{n^{{p}/{2}}}.
\end{equation*}  

As pointed out in \cite{efrom, kpt96}, the natural estimator $|{\hat \beta}_{j,1}|^p + \cdots + |{\hat \beta}_{j,K_j}|^p$ for $\Theta_j\bra{p}$ yields an extremely large bias term and, therefore, undersmoothing effects. Hence, following the procedure as suggested in \cite{efrom,kpt96} (see also \cite{WASA}) but properly adapted to the needlet framework (see Lemma \ref{lemma:needloc}), we propose an alternative method as described below.

Let $p\in \mathbb{N}$ be even, $\Sigma_p$ denoting the set of $p$-dimensional vectors chosen in $\bbra{1,\dots,n}^p$ and also let $\upsilon=\left(i_1,\dots,i_p\right)$ be the generic element belonging to $\Sigma_p$. Define the U-statistic $\Thetaest$ by
\begin{align}
\label{eqn:threshold}
\Thetaest = \frac{1}{\binom {n} {p}} \sum_{k=1}^{K_j} \sum_{\upsilon \in \Sigma_p} \Psi_{j,k}^{\otimes p}\bra{X_{\upsilon},\varepsilon_\upsilon},
\end{align}
where
\begin{align*}\label{eqn:capitalpsi}
\Psi_{j,k}^{\otimes p}\bra{X_{\upsilon},\varepsilon_\upsilon} =  \prod_{h=1}^p \left\{f\bra{X_{i_h}}+\varepsilon_{i_h}\right\} \needlet{X_{i_h}}.
\end{align*}
Given that the sets of variables $\Xset$ and $\noiseset$ are independent, it can be easily seen that   
\begin{eqnarray}
\Ex\{ {\hat \Theta}_j(p)\}
\label{eqn:exptheta} = \sum_{k=1}^{K_j} \abs{\betac}^p= \Theta_j.
\end{eqnarray}

\begin{remark}\label{rem:infty}
As mentioned in the previous section, if we consider the case $p=\infty$, we lack a straightforward extension of $\Thetaest$. Hence, we choose 
	\begin{equation*}
\widehat{\Theta}_j^\infty=\sum_{k=1}^{K_j} |{\hat \beta}_{j,k}|.
	\end{equation*} 
so that the threshold function is given by 	
\begin{equation*}
\tau_j=\ind{\Theta_j ^\infty \geq \frac{B^{dj}}{n^{{1}/{2}}}}.
\end{equation*}  
\end{remark}

Our purpose is to establish two probabilistic bounds related to the $m$th moment and to the $m$th centered moment $\Thetaest$, respectively. We have that
\begin{align*}
\Ex [ \{ {\hat \Theta}_j(p)\}^m] &=\binom{n}{p}^{-m} \sum_{\upsilon_1,\dots,\upsilon_m \in \Sigma_p} \Ex\Bigl\{\sum_{k{_1},\ldots,k_m} \prod_{\ll=1}^m\Psi_{j,k_{\ll}}^{\otimes p} \bra{X_{\upsilon_\ll},\varepsilon_{\upsilon_\ll}} \Bigr\}.
\end{align*}
For any fixed configuration $\upsilon_1,\ldots,\upsilon_m$, let the sequence $c_1, \ldots, c_m$ denote the cardinality of the $\Ex\bra{[\{f\bra{X}+\epsilon\}\psi_{j,k}\bra{X}]^\ll}$ of size $\ll$ appearing in $\Ex\{\Thetaest\}$. Observe that 
\begin{equation*}\label{eqn:celle}
\sum_{\ll=1}^m \ll c_\ll=mp.
\end{equation*} 
Following Kerkyacharian et al. \cite{kpt96}, the next results hold.
\begin{proposition}\label{prop:thres}
Let $\Thetaest$ be given by Eq. \eqref{eqn:threshold}. Also, let $p$ be an even integer. Then, for $m\in \mathbb{N}$, there exists $\tilde{C_1}$ such that 
\begin{equation*}\label{eqn:mbound}
\Ex [ \{ {\hat \Theta}_j(p)\}^m] \leq  \tilde{C}_1 \sum_{\bra{c_1,\ldots, c_m} \in \Gamma_{m,p}}\bra{\frac{B^{jd}}{n}}^{\bra{\frac{mp}{2}-\sum_{\ell=1}^m c_\ell}} \frac{B^{-j\{c_1 \bra{s-\frac{d}{p}}-d\bra{1-\gamma}\}}}{n^{{mp}/{2}}}, 
\end{equation*}
where $\Gamma_{m,p}=\bbra{{c_1,\ldots,c_m} : \sum_{\ll=1}^m \ll c_\ll =mp}$.
\end{proposition}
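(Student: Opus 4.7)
The plan is to expand $\bra{\Thetaest}^m$ as a nested sum, classify the summands by the multiplicity pattern $\bra{c_1,\ldots,c_m}\in\Gamma_{m,p}$ of the observation indices appearing in the product, and estimate each class using sub-Gaussianity of the $\varepsilon_i$'s, the bound $\abs{f}\leq M$, the needlet localization Lemma~\ref{lemma:needloc}, and the Besov embedding.

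First, I would expand
\begin{equation*}
\bra{\Thetaest}^m = \binom{n}{p}^{-m}\sum_{\upsilon_1,\ldots,\upsilon_m \in \Sigma_p}\sum_{k_1,\ldots,k_m=1}^{K_j}\prod_{\ell=1}^m\prod_{h=1}^p \bbra{f\bra{X_{i_{\ell,h}}}+\varepsilon_{i_{\ell,h}}}\psi_{j,k_\ell}\bra{X_{i_{\ell,h}}},
\end{equation*}
where $\upsilon_\ell=(i_{\ell,1},\ldots,i_{\ell,p})$. The $mp$ positions are arranged in $m$ blocks of $p$ positions each, with all positions in block $\ell$ tagged by the common needlet label $k_\ell$. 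For fixed $\upsilon_1,\ldots,\upsilon_m$, the collection $\bbra{i_{\ell,h}}$ partitions into groups by coincidence of the index; letting $c_\ell$ be the number of distinct indices appearing at exactly $\ell$ positions, one has $\sum_\ell \ll c_\ell = mp$, that is $\bra{c_1,\ldots,c_m}\in\Gamma_{m,p}$. The number of $\upsilon$-tuples realising a fixed pattern is at most a combinatorial constant (depending only on $m$ and $p$) times $n^{\sum_\ell c_\ell}$, so combined with $\binom{n}{p}^{-m}\asymp n^{-mp}$ one gets the factor $n^{\sum_\ell c_\ell - mp}$.

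For each pattern I would then estimate the expectation using independence across distinct indices. At a group of multiplicity $\ell\geq 2$, attached to needlet labels $k_{\tau_1},\ldots,k_{\tau_\ell}$ inherited from the $\ell$ blocks meeting the group, the sub-Gaussian moment bound $\Ex\abs{\varepsilon}^r\leq 2\bra{r/e}^{r/2}\zeta\bra{\varepsilon}^r$ together with $\abs{f}\leq M$ controls $\Ex[\bbra{f\bra{X}+\varepsilon}^\ell \mid X]$ by a constant, reducing matters to $\int_{\sphere}\prod_{r=1}^\ell \psi_{j,k_{\tau_r}}\bra{x}\,dx$. If the $k_{\tau_r}$'s are pairwise distinct, Lemma~\ref{lemma:needloc} controls this by $B^{jd\bra{\ell-1}}$ up to a rapidly decaying factor in the minimum pairwise cubature distance; otherwise one reverts to the $L^p$-norm bound Eq.~\eqref{eqn:Lpnorm}. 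At a singleton, the expectation collapses to $\betac$ for the block's $k$-label, since $\Ex[\bbra{f\bra{X}+\varepsilon}\needlet{X}]=\betac$.

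It remains to sum over $k_1,\ldots,k_m$. The $c_1$ singleton factors produce a product of $c_1$ needlet coefficients whose global sum I would control via Hölder's inequality together with the Besov bound Eq.~\eqref{eqn:bbes}, namely $\sum_{k=1}^{K_j}\abs{\betac}^p\leq CR^pB^{-jp\bbra{s+d\bra{1/2-1/p}}}$, and the embedding inequalities \eqref{eqn:besovmin}--\eqref{eqn:besovmax}, yielding a factor of order $B^{-jc_1\bra{s-d/p}}$. The remaining $k$-sums, paired with the decay factors from Lemma~\ref{lemma:needloc} and the integrals $\int \prod \psi_{j,k_{\tau_r}}\,dx$ of the higher-multiplicity groups, produce the residual geometric factor $B^{jd\bra{1-\gamma}}$; assembling all contributions and tracking the resulting exponents of $B^j$ and $n$ using $\sum_\ell \ll c_\ell = mp$ yields the stated bound after summing over $\Gamma_{m,p}$. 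The main obstacle is the delicate bookkeeping in the last two steps: when the needlet labels inside one multiplicity group are pairwise distinct, Lemma~\ref{lemma:needloc} is essentially sharp, but when they partially coincide one must revert to Eq.~\eqref{eqn:Lpnorm}; reconciling these regimes while summing the rapidly decaying tails from Lemma~\ref{lemma:needloc} over all admissible cubature positions is where the adaptation of the arguments of Kerkyacharian et al.~\cite{kpt96} to the needlet framework requires the most care.
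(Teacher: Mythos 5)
Your proposal follows essentially the same route as the paper's proof: the same expansion of the $m$th moment of the U-statistic, the same classification of index configurations by the multiplicity pattern $\bra{c_1,\ldots,c_m}\in\Gamma_{m,p}$ with the combinatorial count $n^{\sum_\ell c_\ell}\binom{n}{p}^{-m}$, the same treatment of groups of multiplicity $\ell\geq 2$ via boundedness of $f$, sub-Gaussian moments of $\varepsilon$ and Lemma~\ref{lemma:needloc} (reverting to Eq.~\eqref{eqn:Lpnorm} when needlet labels coincide), and the same use of the Besov embedding \eqref{eqn:besovmin} on the $c_1$ singleton factors to produce $B^{-jc_1\bra{s-d/p}}$ and $B^{jd\bra{1-\gamma}}$ with $\gamma=\min\bra{p,c_1}/p$. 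The only discrepancy is presentational: in the paper the factor $B^{jd\bra{1-\gamma}}$ arises entirely from the embedding applied to $\sum_k\beta_{j,k}^{c_1}$, not from the higher-multiplicity integrals as you suggest, but this does not affect the argument.
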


\begin{proposition}\label{prop:threscent}
	Let $\Thetaest$ and $\Theta_j$ be given by \eqref{eqn:threshold} and \eqref{eqn:exptheta} respectively.  Also, let $p,m$ be even integers. Then, there exists $\tilde{C_2}$ such that 
	\begin{equation*}\label{eqn:mboundcent}
	{\rm E} \{ | {\hat \Theta}_j(p) - \Theta_j (p)|^m\}  \leq  \tilde{C_2}  \sum_{h=1}^p \bra{\frac{B^{jd}}{n^{{p}/{2}}}}^{\frac{mh}{p}}\{ \Theta_j\bra{p}\}^{\bra{1-{h}/{p}}m}. 
	\end{equation*}
\end{proposition}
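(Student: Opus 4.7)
The plan is to mimic the proof of Proposition \ref{prop:thres} via a direct combinatorial expansion of $(\hat\Theta_j(p) - \Theta_j(p))^m$, while exploiting the centering to eliminate configurations that would otherwise produce the dominant $\Theta_j(p)^m$ contribution.

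First I would expand
\[
(\hat\Theta_j(p) - \Theta_j(p))^m = \binom{n}{p}^{-m}\sum_{k_1,\ldots,k_m}\sum_{\upsilon_1,\ldots,\upsilon_m \in \Sigma_p}\prod_{\ell=1}^m \left\{\Psi_{j,k_\ell}^{\otimes p}(X_{\upsilon_\ell},\varepsilon_{\upsilon_\ell}) - |\beta_{j,k_\ell}|^p\right\},
\]
using that $\Ex[\Psi_{j,k}^{\otimes p}] = \beta_{j,k}^p = |\beta_{j,k}|^p$ since $p$ is even, cf.\ Eq.~\eqref{eqn:exptheta}. Taking expectations and using independence of the pairs $(X_i,\varepsilon_i)$, any configuration $(\upsilon_1,\ldots,\upsilon_m)$ in which some block $\upsilon_{\ell^*}$ shares no sample index with any other block produces a mean-zero, independent factor, so its contribution vanishes. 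The surviving configurations can then be classified by an integer parameter $h \in \{1,\ldots,p\}$ that encodes the effective inter-block overlap.

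For each fixed overlap pattern of parameter $h$, I would estimate separately: (i) the number of surviving tuples in $\Sigma_p^m$, which is of order $n^{mp-mh}$, so that combined with the prefactor $\binom{n}{p}^{-m}\asymp n^{-mp}$ one obtains an overall $n^{-mh}$ combinatorial scaling; and (ii) the expectation within that pattern, where the sub-Gaussian moment estimates from Section \ref{sub:noise} for the $\varepsilon$-factors and the bound $|f|\leq M$ from Eq.~\eqref{eqn:boundf} reduce matters to integrating products of overlapping needlets over $\sphere$. The shared sample indices yield integrals of overlapping products $\prod_{\ell}\psi_{j,k_\ell}$, which by Lemma~\ref{lemma:needloc} contribute powers of $B^{dj}$; the non-shared indices produce individual factors of $|\beta_{j,k_\ell}|$ which, after the outer sum over $k_1,\ldots,k_m$, reassemble into $\Theta_j(p)^{m(1-h/p)}$. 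Matching the powers from these three sources — the combinatorial $n^{-mh}$, the $\varepsilon$-moment contribution $n^{-mp/2}$ that pairs with the needlet-integral $B^{dj\cdot mh/p}$ factor, and the non-shared $\Theta_j(p)$ factor — each $h$-term contributes $(B^{dj}/n^{p/2})^{mh/p}\,\Theta_j(p)^{m(1-h/p)}$, and summing over $h\in\{1,\ldots,p\}$ yields the announced bound.

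The main obstacle is the combinatorial bookkeeping for the overlap patterns, and in particular showing that the ``dangling'' $|\beta_{j,k_\ell}|$ legs regroup cleanly into integer powers of $\Theta_j(p)$. When they do not naturally pair into complete copies of $\sum_{k}|\beta_{j,k}|^p$, one must invoke H\"older's inequality together with the Besov embedding identities \eqref{eqn:besovmin}--\eqref{eqn:besovmax} to reassemble the sum. The decisive role of centering is that the $h=0$ configurations, which would otherwise contribute the leading $\Theta_j(p)^m$ term present in Proposition \ref{prop:thres}, are annihilated, leaving only the decaying contributions stated here.
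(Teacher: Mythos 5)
Your overall strategy---kill the non-overlapping configurations by centering, classify the survivors by an overlap parameter $h$, and use Lemma \ref{lemma:needloc} plus the Besov embedding to reassemble the bound---is in the right spirit, but as written it has a gap at precisely the step you defer as ``the main obstacle.'' Centering each whole block $\Psi_{j,k_\ell}^{\otimes p}-|\beta_{j,k_\ell}|^p$ does not by itself yield a classification by a single integer $h$. If a block shares a set $S$ of $|S|=h$ sample indices with the other blocks, conditioning on the shared variables leaves $\beta_{j,k}^{p-h}\prod_{i\in S}\{f(X_i)+\varepsilon_i\}\psi_{j,k}(X_i)-\beta_{j,k}^{p}$, which is \emph{not} a degenerate kernel of order $h$: expanding it binomially gives $\sum_{\emptyset\neq T\subseteq S}\beta_{j,k}^{p-|T|}\prod_{i\in T}[\{f(X_i)+\varepsilon_i\}\psi_{j,k}(X_i)-\beta_{j,k}]$, a sum over all degeneracy orders $|T|\le h$, and the terms with $|T|<h$ are not controlled by the overlap count alone. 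The paper's proof resolves exactly this by starting from the identity $\prod_{i=1}^{p}x_i-\beta^{p}=\sum_{h=1}^{p}\beta^{p-h}\sum_{t_1<\cdots<t_h}\prod_{i=1}^{h}(x_{t_i}-\beta)$, which rewrites $\hat{\Theta}_j(p)-\Theta_j(p)$ as an explicit sum over $h$ of \emph{degenerate} U-statistics with kernels $\tilde{\Psi}_{j,k}^{\otimes h}$ and deterministic coefficients $\beta_{j,k}^{p-h}\binom{n-h}{p-h}/\binom{n}{p}$; degeneracy then licenses discarding all configurations with $c_1\neq 0$, which is the rigorous analogue of your ``vanishing'' step. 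Without this identity your parameter $h$ is not well defined and the bookkeeping cannot be closed.

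Separately, your power counting does not add up as described, even though you announce the correct final exponents. The surviving tuples for overlap $h$ number at most $O(n^{mp-mh/2})$ (each shared index is counted at least twice), not $n^{mp-mh}$; the sub-Gaussian moments of $\varepsilon$ contribute only constants, not a factor $n^{-mp/2}$; and the factor $B^{jd\,mh/p}$ in the target bound does not come from the needlet integrals of Lemma \ref{lemma:needloc} (those powers of $B^{dj}$ are absorbed via $B^{dj}\le n$) but from applying H\"older's inequality and Eq.~\eqref{eqn:besovmin} to the dangling coefficients, $\sum_{k}|\beta_{j,k}|^{p-h}\le B^{jdh/p}\{\Theta_j(p)\}^{1-h/p}$. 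The correct balance in the paper is: the coefficient $\{\binom{n-h}{p-h}/\binom{n}{p}\}^{m}\approx n^{-mh}$ multiplied by the configuration count $n^{\sum_\ell c_\ell}\le n^{mh/2}$ (using degeneracy, $c_1=0$, hence $\sum_{\ell\ge 2}c_\ell\le mh/2$) yields $n^{-mh/2}$, while the $B^{jd\,mh/p}\{\Theta_j(p)\}^{(1-h/p)m}$ factor is supplied entirely by the $\beta$-coefficients.
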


\begin{remark}\label{remark:interpole} According to \cite{kpt96}, this procedure can be easily extended to the case of $p$ is not being an even natural number, by means of an interpolation method. Indeed, by fixing $p_1,p_2 \in \mathbb{N}$, both even, we can rewrite $p=\delta p_1 + \bra{1-\delta}p_2$, as 
\begin{equation*}
\Thetaest=\{\hat{\Theta}_j\bra{p_1}\}^{\delta}\{\hat{\Theta}_j\bra{p_2}\}^{1-\delta}.
\end{equation*} 

The following lemma is crucial for the application of our interpolation method. As in \cite{kpt96}, we consider for the sake of simplicity just the case $p_2-p_1=2$, so that $p_0\leq p\leq p_0^\prime$, $p_0^\prime=p_0+2$.
\end{remark}
\begin{lemma}\label{lemma:central}
For any even $m \in \mathbb{N}$, 
\begin{equation*}
	\Ex\bbra{\abs{\frac{1}{n}\hat{\Theta}_j\bra{p_0} - \Theta_j\bra{p_0^\prime}}^m} \leq  \tilde{C_2}  \sum_{h=1}^{p_0^\prime} \{ \Theta_j\bra{p_0^\prime-h}\}^{m}n^{- {mh}/{2}}. 
\end{equation*}	
\end{lemma}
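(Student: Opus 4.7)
The strategy adapts the U-statistic moment bounds of \cite{kpt96} to the needlet setting, using Proposition \ref{prop:threscent} as the main probabilistic input and exploiting the boundedness of $f$ (hence of the coefficients $\beta_{j,k}$) to pass between $\Theta_j(p_0)$ and $\Theta_j(p_0')$ (recall $p_0'=p_0+2$). First I would split
\begin{equation*}
\tfrac{1}{n}\hat\Theta_j(p_0) - \Theta_j(p_0') = \tfrac{1}{n}\{\hat\Theta_j(p_0) - \Theta_j(p_0)\} + \{\tfrac{1}{n}\Theta_j(p_0) - \Theta_j(p_0')\}
\end{equation*}
and invoke Minkowski's inequality to reduce to separate $m$th-moment bounds for each piece.

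The stochastic summand is handled by Proposition \ref{prop:threscent} with $p=p_0$, which yields a sum over $h=1,\ldots,p_0$ of terms $(B^{jd}/n^{p_0/2})^{mh/p_0}\{\Theta_j(p_0)\}^{(1-h/p_0)m}$; dividing by $n^m$ produces an additional $n^{-m}$. I would then re-index this expansion in terms of $\Theta_j(p_0'-h)$. The key tool is the pointwise estimate $|\beta_{j,k}| \leq c\,B^{-jd/2}$, which follows from $\sup_x |f(x)|\leq M$ together with the localization \eqref{eqn:needloc} (or, equivalently, from \eqref{eqn:boundsuppa}), combined with $K_j \asymp B^{jd}$. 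Writing $|\beta_{j,k}|^{p_0} = |\beta_{j,k}|^{p_0'-h'}\cdot |\beta_{j,k}|^{h'-2}$ and applying H\"older's inequality with these pointwise bounds lets me trade powers of $\Theta_j(p_0)$ and $K_j$ against $\Theta_j(p_0'-h')$. After this conversion, the factors of $B^{jd}$ coming from $K_j$ and from the $(B^{jd}/n^{p_0/2})^{mh/p_0}$ prefactor combine to leave only $n^{-mh'/2}$. The deterministic summand $|\tfrac{1}{n}\Theta_j(p_0) - \Theta_j(p_0')|^m$ is treated similarly via $|\beta_{j,k}|^{p_0'}\leq c^2 B^{-jd}|\beta_{j,k}|^{p_0}$, which compares $\Theta_j(p_0')$ to $B^{-jd}\Theta_j(p_0)$ and fits into one of the summands on the right.

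The principal difficulty is the combinatorial bookkeeping in the re-indexing step: Proposition \ref{prop:threscent} expresses the bound through $\{\Theta_j(p_0)\}^{(1-h/p_0)m}$ with non-integer exponents, whereas the lemma demands $\{\Theta_j(p_0'-h)\}^m$ with integer shifts. Matching these formulations cleanly --- while tracking the powers of $B^{jd}$ and $n$ that must cancel to leave only $n^{-mh/2}$ --- requires careful use of the uniform bound on $|\beta_{j,k}|$ and of the Besov-type inequalities \eqref{eqn:besovmin}--\eqref{eqn:besovmax}, particularly for the boundary cases $h=1$ and $h=p_0'$.
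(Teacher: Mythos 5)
Your decomposition is genuinely different from the paper's, and both halves of it run into the same obstruction: for $j\le J_n$ one has $B^{jd}\le n$, so the pointwise bound $|\beta_{j,k}|\le c\,B^{-jd/2}$ gives $B^{-jd/2}\ge n^{-1/2}$ --- the wrong direction for every absorption step you propose. Concretely, for the deterministic summand you need $\{\Theta_j(p_0')\}^m$ to be dominated by some $\{\Theta_j(p_0'-h)\}^m n^{-mh/2}$ with $h\ge 1$; your estimate $\Theta_j(p_0')\le c^2B^{-jd}\Theta_j(p_0)$ would place it in the $h=2$ slot only if $B^{-jd}\lesssim n^{-1}$, which holds solely at $j=J_n$. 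At low resolution levels (test a single nonzero coefficient of size $|\beta_{j,k}|\asymp B^{-jd/2}$) the quantity $\{\Theta_j(p_0')\}^m$ strictly exceeds every summand on the right-hand side, so no constant rescues the triangle-inequality split. For the stochastic summand the re-indexing also fails: Proposition \ref{prop:threscent} has already been weakened by a H\"older step, since \eqref{eqn:besovmin} gives $\Theta_j(p_0-h)\le K_j^{h/p_0}\{\Theta_j(p_0)\}^{1-h/p_0}$, i.e.\ the quantity $(B^{jd})^{h/p_0}\{\Theta_j(p_0)\}^{1-h/p_0}$ appearing in that proposition is an \emph{upper} bound for $\Theta_j(p_0-h)$; the reverse inequality you need is false (for a single nonzero coefficient the ratio is $(B^{jd})^{h/p_0}$). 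So the statement of Proposition \ref{prop:threscent} cannot be used as a black box here. The stochastic half is repairable if you step inside its proof and stop before the final H\"older step, where the bound still reads $\sum_{h=1}^{p_0}n^{-mh/2}\{\Theta_j(p_0-h)\}^m$; dividing by $n^m$ then produces exactly the terms $h'=h+2\in\{3,\dots,p_0'\}$ of the lemma.

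The paper takes a different route that never isolates the bias: following Lemma 6 of Kerkyacharian, Picard and Tribouley, it writes $\tfrac1n\hat\Theta_j(p_0)-\Theta_j(p_0')$ as a single exact expansion $\binom{n}{p_0}^{-1}\sum_k\sum_{\upsilon}\sum_{h=1}^{p_0'}\beta_{j,k}^{p_0'-h}\{\cdots\}$ in which degenerate $U$-statistics of orders $h$, $h-1$, $h-2$ appear with explicit weights $1$, $n^{-1/2}$, $n^{-1}$, and then reruns the moment computation of Proposition \ref{prop:threscent} on that expansion. Because the powers $\beta_{j,k}^{p_0'-h}$ occur natively, summing over $k$ yields $\{\Theta_j(p_0'-h)\}^m$ with integer shifts directly, and the would-be bias is distributed over the $h=1,2$ terms, which already carry the factors $n^{-1/2}$ and $n^{-1}$. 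If you wish to keep your two-term split, you must at minimum replace the treatment of the deterministic half by this identity; as written, that half of your argument cannot close.
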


We conclude this section with a result regarding the behavior of $\widehat{\Theta}_j^\infty$. 

\begin{proposition}\label{prop:threspuno}
	Let $\widehat{\Theta}_j^\infty$ and $\Theta^\infty_j$ be given by \eqref{eqn:threshold} and \eqref{eqn:exptheta} for $p=1$, respectively. Then, there exists $\tilde{C}_\infty$ such that  
	\begin{equation*}
	\Ex ( |\widehat{\Theta}_j^\infty - \Theta^\infty_j|^2) \leq  \tilde{C}_\infty  B^{dj}/n. 
	\end{equation*}
\end{proposition}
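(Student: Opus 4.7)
The key observation is that for $p=1$ the $U$-statistic in~\eqref{eqn:threshold} collapses to a simple empirical linear functional. Setting $U_j(x) := \sum_{k=1}^{K_j}\psi_{j,k}(x)$, one has
\[
\widehat{\Theta}_j^\infty = \frac{1}{n}\sum_{i=1}^n Y_i\, U_j(X_i),
\qquad
\Theta_j^\infty = \int_{\sphere} f(x)\,U_j(x)\,dx,
\]
so that $\widehat{\Theta}_j^\infty - \Theta_j^\infty = n^{-1}\sum_{i=1}^n W_i$ with $W_i := Y_i U_j(X_i) - \Theta_j^\infty$ i.i.d.\ and centered. The first step is therefore to reduce the claim to a single-term variance bound:
\[
\Ex\bigl(|\widehat{\Theta}_j^\infty - \Theta_j^\infty|^2\bigr) = \frac{\mathrm{Var}(W_1)}{n} \leq \frac{1}{n}\,\Ex\bigl(Y_1^2\,U_j(X_1)^2\bigr).
\]

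Next, using the independence of $\varepsilon_1$ and $X_1$, the sub-Gaussian bound $\Ex(\varepsilon_1^2)\leq\sigma_\varepsilon^2$, and the uniform bound $|f|\leq M$ from~\eqref{eqn:boundf}, the second moment factorizes into a signal and a noise part:
\[
\Ex\bigl(Y_1^2\,U_j(X_1)^2\bigr) = \int_{\sphere}\bigl\{f(x)^2+\sigma_\varepsilon^2\bigr\}\,U_j(x)^2\,dx \leq \bigl(M^2+\sigma_\varepsilon^2\bigr)\,\|U_j\|_{\Ltwo}^2.
\]
The whole proposition thus reduces to showing that $\|U_j\|_{\Ltwo}^2 \leq c\,B^{dj}$.

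The main obstacle is this last $L^2$-estimate of the aggregated needlet kernel $U_j$. I would split
\[
\|U_j\|_{\Ltwo}^2 = \sum_{k=1}^{K_j}\|\psi_{j,k}\|_{\Ltwo}^2 + \sum_{k\neq k'}\int_{\sphere}\psi_{j,k}(x)\psi_{j,k'}(x)\,dx.
\]
The diagonal contribution is of order $K_j \approx B^{dj}$ by~\eqref{eqn:Lpnorm} at $p=2$. For each off-diagonal pair, Lemma~\ref{lemma:needloc} with $q=2$ yields the polynomial decay factor $(1+B^{dj}d(\xi_{j,k},\xi_{j,k'}))^{-\eta}$; combined with the quasi-uniform spacing $\sim B^{-j}$ of the cubature points on $\sphere$, a standard discrete volume-counting argument shows that for $\eta$ large enough the sum $\sum_{k'\neq k}(1+B^{dj}d(\xi_{j,k},\xi_{j,k'}))^{-\eta}$ is bounded uniformly in $k$ and $j$, so the off-diagonal contribution is again of order at most $B^{dj}$. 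Assembling the three steps yields the desired bound $\tilde{C}_\infty B^{dj}/n$. The delicate technical point is precisely the summability of these inverse-polynomial weights over the cubature grid, which is where the sharp localization of needlets really enters.
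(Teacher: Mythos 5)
Your argument is correct and follows the same basic route as the paper's: both reduce the claim to the variance of the i.i.d.\ average $n^{-1}\sum_i Y_i\sum_k\psi_{j,k}(X_i)$, bound the single-term second moment by $(M^2+\sigma_\varepsilon^2)$ times an $L^2$-quantity of order $B^{dj}$ using \eqref{eqn:boundf} and the independence of $X$ and $\varepsilon$, and conclude. The one substantive difference is in how that $L^2$-quantity is controlled: the paper's displayed computation keeps only the diagonal terms $\sum_k\Ex[\{Y\psi_{j,k}(X)\}^2]\approx K_j\approx B^{dj}$ and silently discards the cross-terms $\Ex[Y^2\psi_{j,k}(X)\psi_{j,k'}(X)]$ for $k\neq k'$, whereas you bound them explicitly through the off-diagonal part of $\|U_j\|_{\Ltwo}^2$, invoking Lemma~\ref{lemma:needloc} with $q=2$ together with the quasi-uniform $B^{-j}$ spacing of the cubature points. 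Your version is therefore the more complete of the two, and the summability of the inverse-polynomial weights that you single out is indeed the only point requiring care (it is the standard needlet near-orthogonality estimate, valid for $\eta$ large enough). One caveat applies equally to both proofs: you (like the paper's own argument) read $\widehat{\Theta}_j^\infty$ as the $p=1$ specialization of \eqref{eqn:threshold}, i.e.\ $\sum_k\widehat{\beta}_{j,k}$ without absolute values, which makes the statistic linear and unbiased for $\sum_k\beta_{j,k}$; under the alternative definition $\sum_k|\widehat{\beta}_{j,k}|$ given in Remark~\ref{rem:infty} the statistic is no longer a centered i.i.d.\ average and the reduction to a single-term variance would fail, so you should state explicitly which definition you are using.
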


\section{Simulations}\label{sec:numerical}

In this section, we present the results of some numerical experiments performed over the unit circle $\mathbb{S}^1$. In particular, we are mainly concerned with the empirical evaluation of $L^2$-risks obtained by global thresholding techniques, which are then compared to the $L^2$-loss functions for linear wavelet estimators. 

As in any practical situation, the simulations are computed over finite samples and therefore, they can be considered as a reasonable hint. Furthermore, they can be viewed as a preliminary study to practical applications on real data concerning estimation of the power spectrum of the Cosmic Microwave Background radiation; see, e.g., Fa\"y et al. \cite{fay08}.
 
The needlets over the circle used here are based on a weight function $b$ which, analogously to Baldi et al. \cite{bkmpAoSb}, is a properly rescaled primitive of the function 
\begin{equation*}
x\mapsto e^{-1/(1-x^2)}; 
\end{equation*}
see also Marinucci and Peccati \cite{MaPeCUP}. Following Theorem~\ref{thm:main}, we fix $B=2$ and $n=2^6,2^7,2^8$ and $J_n=6,7,8$, respectively. The $U$-statistic $\widehat{\Theta}_j\bra{2}$, corresponding to the $L^2$ risk considered here, results in considerable computational effort, because it is built over 2,016, 8,128 or 32,640 possible combinations of needlet empirical coefficients for $J_n=6,7,8$, respectively.
 
By choosing a test function $F$ and fixing the set of locations $X_1,\ldots,X_n$, we obtain the corresponding $Y_1,\ldots,Y_n$ by adding to $F\bra{X_i}$ a Gaussian noise, with three different amplitudes, i.e., the noise standard deviation $\sigma_\varepsilon$ is chosen to be equal to $0.25M$, $0.5M$ or $0.75M$, where $M$ is the $L^\infty$-norm of the test function. Therefore, the following three numerical experiments are performed.

\begin{example}
According to Baldi et al. \cite{bkmpAoSb}, we use the uniform test function defined, for all $x\in \mathbb{S}^1$, by
\begin{equation*}
F_1\bra{x}=\frac{1}{4\pi} \,.
\end{equation*}	
In this case, for every $j,k$, we get $\betac=0$. The performance of our model can be roughly evaluated by simply controlling how many resolution levels pass the selection procedure. For all the choices of $n$ and $\sigma$, we get $\tauthres=0$ for all $j \in \{ 1,\ldots,J_n\}$. On the other hand, we consider a finite number of resolution levels and therefore of frequencies. Thus, it is possible that higher resolution levels, involving higher frequencies, could be selected by the thresholding procedure.  
\end{example} 

\begin{example}
In this example, we choose the function defined, for all $x \in \mathbb{S}^1$, by
	\begin{equation*}
	F_2\bra{x}=\cos\l(4x\r).
	\end{equation*}
In this case, the test function corresponds to the real part of one of the Fourier modes over the circle (with eigenvalue $4$). This choice allows us to establish whether the thresholding procedure is able to select only the suitable corresponding resolution levels, as the amplitude of the noise increases. As expected, for every $n$ and for every $\sigma_\varepsilon$, we get $\tauthres=1$ for $j=2$ (containing the frequency $k=4$) and $0$ otherwise. Table \ref{tab:1} presents the value for $L^2$-risks for different values of $J_n$ and $\sigma_\varepsilon$, while Figure \ref{fig:ex42} illustrates the graphical results for the case $J_n=8$ and $\sigma_\varepsilon=0.5 M$.\\
\begin{table}
	\centering
\begin{tabular}{|c||c|c|c || c |c| c| }	
	\hline
	{Example 4.2} & \multicolumn{3}{|c||}{Global} & 	\multicolumn{3}{|c|}{Linear} \\ \hline
	  $J_n \downarrow  \backslash  \sigma_\varepsilon \rightarrow$ & $0.25 M $&$0.50M$&$0.75M$& $0.25M $&$0.50M$&$0.75M$\\ \hline	  
	$6$& 7.82 &  65.29 & 108.46 & 80.23 & 411.38 &889.15 \\
	$7$& 1.90 & 9.38 & 67.09 & 26.75 & 141.41 &451.61 \\
	$8$& 1.77 &  18.88 & 54.03 & 36.69 & 96.82& 434.80 \\
	\hline
\end{tabular}
\caption{Example 4.2 - Values for $L^2$ risk}\label{tab:1} 
\end{table}      
\begin{figure}[htbp]
	\centering
\fbox{\includegraphics[scale=0.43
	]{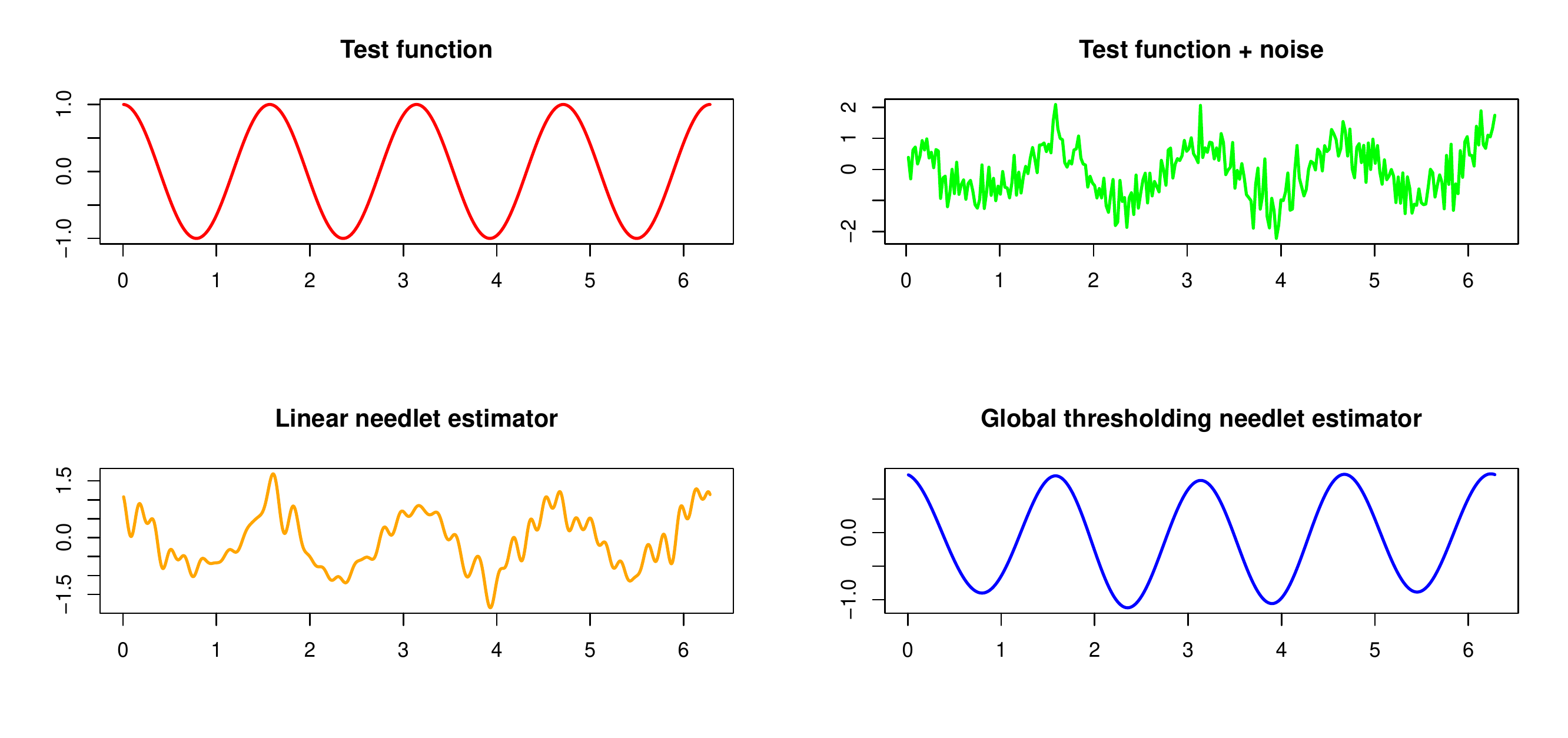}}
	\caption{Example 4.2 --- $J_n=8$, $\sigma_\varepsilon=0.5 M$}\label{fig:ex42}
\end{figure}
\end{example} 
 
\begin{example}
A more general function is chosen here, which is defined, for all $x \in \mathbb{S}^1$, by
\begin{equation*}
F_3\bra{x}= \left\{ e^{-(x-3\pi/2)^2}+2e^{-\l(x-2\r)^2}\right\} \sin\l(-2x\r),
\end{equation*}
depending on a larger set of Fourier modes. In this case, Table \ref{tab:2} gives the values of $L^2$-risks corresponding to different $J_n$ and $\sigma_\varepsilon$, while Figure \ref{fig:ex43} presents the graphical results for the case $J_n=8$ and $\sigma_\varepsilon=0.5 M$. Table \ref{tab:3} contains, for every pair $J_n$, $\sigma_\varepsilon$, the resolution levels selected by the procedure.\\
\begin{table}
	\centering
	\begin{tabular}{|c||c|c|c || c |c| c| }	
		\hline
		{Example 4.3} & \multicolumn{3}{|c||}{Global} & 	\multicolumn{3}{|c|}{Linear} \\ \hline
		$J_n \downarrow  \backslash  \sigma_\varepsilon \rightarrow$ & $0.25 M $&$0.50M$&$0.75M$& $0.25M $&$0.50M$&$0.75M$\\ \hline	  
		$6$& 62.01 & 208.94 & 625.02 & 227.32 & 1372.64&2296.28 \\
		$7$& 58.87 & 150.73 & 277.98 & 150.94 & 644.31&1294.59 \\
		$8$& 51.14 & 40.65 & NA & 321.05 & 1271.31&NA \\
		\hline
	\end{tabular}
	\caption{ Example 4.3 --- Values for $L^2$ risk}	\label{tab:2} 
\end{table}     

\begin{center}   
\begin{table}
\centering
	\begin{tabular}{|c||c|c|c | }		\hline
		{Example 4.3} & \multicolumn{3}{||c|}{Selected $j$}  \\ \hline
		$J_n \downarrow  \backslash  \sigma_\varepsilon \rightarrow$ & $0.25 M $&$0.50M$&$0.75M$ \\ \hline	  
		$6$& 1 & 1,2 & 1,2,3 \\
		$7$&1 & 1,4 & 1,2  \\
		$8$& 1 & 1,2 & 1,3,5  \\
		\hline
	\end{tabular}
	\caption{Example 4.3 - Values of the function $\tauthres$}		\label{tab:3} 	
\end{table}   
\end{center}

\begin{figure}[htbp]
	\centering
	\fbox{\includegraphics[scale=0.43
		]{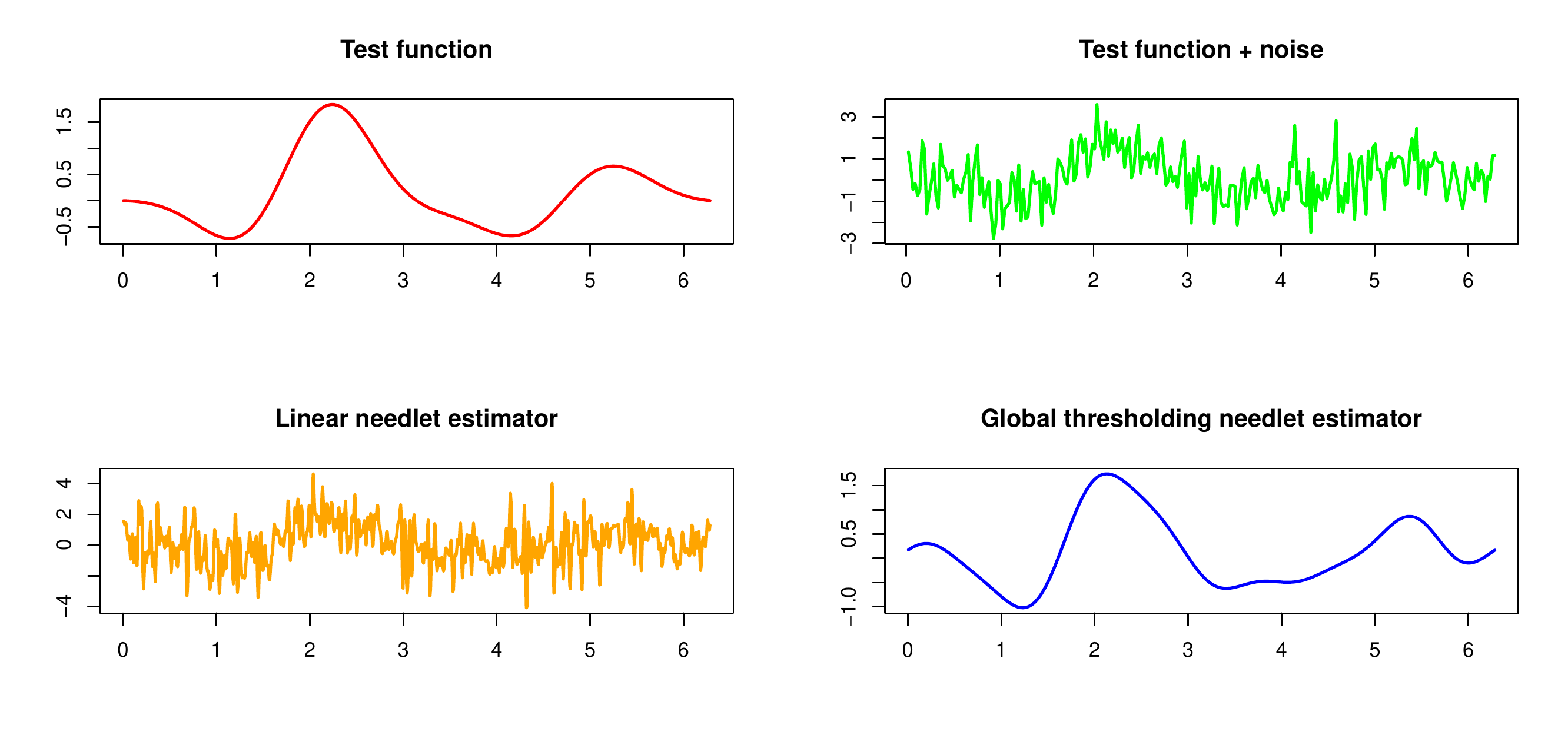}}
	\caption{Example 4.3 - $J_n=8$, $\sigma_\varepsilon=0.5 M$}\label{fig:ex43}
\end{figure}
\end{example}

\newpage
\section{Proofs}\label{sec:proofs}

In this section, we provide proofs for the main and auxiliary results. 

\subsection{Proof of the main results}   

The proofs of Theorem \ref{thm:main} and Theorem \ref{thm:infty} follow along the same lines as the proof of Theorem 8 in Baldi et al. \cite{bkmpAoSb}. 

\begin{proof}[Proof of Theorem \ref{thm:main}]

Following, for instance, \cite{bkmpAoSb, donoho1,durastanti2,dmg, WASA, kpt96} and as mentioned in Section~\ref{sub:est}, the $L^p$-risk $\Ex ( \|\fest-f\|_\Lp^p )$ can be decomposed as the sum of a stochastic and a bias term. More specifically,
	\begin{align*}
	\Ex (\| {\hat f}_n - f\|_{\Lp}^p) \leq  2^{p-1}&\bbra{\Ex\bra{\norm{\sum_{j=0}^{J_n} \sum_{k=1}^{K_j} (\tauthres\betaest-\betac)\psi_{j,k} }_\Lp^p} \right.&\\
	 & \ \ \ \left.+\norm{ \sum_{j>J_n} \sum_{k=1}^{K_j} \betac \psi_{j,k} }_\Lp^p}.  
	\end{align*}
Using the definition of Besov spaces, we obtain for the bias term the following inequality:
	\begin{align}
	\norm{ \sum_{j>J_n} \sum_{k=1}^{K_j} \betac \psi_{j,k} }_\Lp^p \leq & \notag   
	 \sum_{j>J_n}\norm{ \sum_{k=1}^{K_j} \betac \psi_{j,k} }_\Lp^p  \notag \leq  CB^{-s p J_n} \leq  Cn^{-\frac{ps}{2s+d}}.\label{eqn:bias}
	\end{align}
Following Baldi et al. \cite{bkmpAoSb}, the stochastic term can be split into four terms, i.e.,
	\begin{equation*}
	\Ex \bra{\norm{\sum_{j=0}^{J_n} \sum_{k=1}^{K_j} (\tauthres\betaest-\betac)\psi_{j,k} }_\Lp^p} \leq 4^{p-1} (Aa + Au + Ua + Ua) , 
	\end{equation*} 
	where
	\begin{align*}
	Aa = 	\Ex\!\bra{\!\norm{\!\sum_{j=0}^{J_n}\! \sum_{k=1}^{K_j} (\tauthres\betaest -\betac) \psi_{j,k} \ind{\Thetaest\geq \frac{B^{dj}}{n^{{p}/{2}}}}\ind{\Theta_j\bra{p}\geq \frac{B^{dj}}{2n^{{p}/{2}}}} }_\Lp^p},
	\end{align*}
	\begin{align*}
	Au =  	\Ex\!\bra{\!\norm{\!\sum_{j=0}^{J_n} \sum_{k=1}^{K_j} (\tauthres\betaest-\betac)\psi_{j,k} \ind{\Thetaest\geq \frac{B^{dj}}{n^{{p}/{2}}}}\ind{\Theta_j\bra{p}\leq \frac{B^{dj}}{2n^{{p}/{2}}}}\!} _\Lp^p},
	\end{align*}
	\begin{align*}
 	Ua =  	\Ex\!\bra{\!\norm{\!\sum_{j=0}^{J_n} \sum_{k=1}^{K_j} (\tauthres\betaest-\betac) \psi_{j,k} \ind{\Thetaest\leq \frac{B^{dj}}{n^{{p}/{2}}}}\ind{\Theta_j\bra{p}\geq \frac{2B^{dj}}{n^{{p}/{2}}}} \!}_\Lp^p},
	\end{align*}
	\begin{align*}
		Uu = \Ex\!\bra{\!\norm{\!\sum_{j=0}^{J_n} \sum_{k=1}^{K_j} (\!\tauthres\betaest\!-\!\betac) \psi_{j,k} \ind{\Thetaest\leq \frac{B^{dj}}{n^{{p}/{2}}}}\ind{\Theta_j\bra{p}\leq \frac{2B^{dj}}{n^{{p}/{2}}}}\!} _\Lp^p}.
	\end{align*}

Following Durastanti et al. \cite{dmg}, the labels $A$ and $U$ denote the regions where
$\Thetaest$ is larger and smaller than the threshold $B^{dj}n^{-p/2}$ respectively, whereas $a$ and $u$ refer to the regions where the deterministic $\Theta_j\bra{p}$ are above and under a new threshold, given by $2^{-1}B^{dj}n^{-p/2}$ for $a$ and $2B^{dj}n^{-p/2}$ for $u$. The decay of $Aa$ and $Uu$ depends on the properties of Besov spaces, while the bounds on $Au$ and $Ua$ depend on the probabilistic inequalities concerning $\betaest$ and $\Thetaest$, given in Propositions \ref{prop:betac}, \ref{prop:thres} and \ref{prop:threscent}.

Let $p \in \mathbb{N}$ be even. Then, using the definition of the optimal bandwidth selection, we have
	\begin{eqnarray*}
	Aa &\leq &  C_1^{\prime} \bra{J_n+1}^{p-1} \!\sum_{j=0}^{J_n}  \norm{\psi_{j,k}}_\Lp^p \sum_{k=1}^{K_j}\!\Ex\bra{|{\hat \beta}_{j,k} - \beta_{j,k}|^p} \ind{\abs{\Theta_j\bra{p}\geq \frac{B^{dj}}{2n^{{p/2}}}}}\\
	&\leq & C_1^{\prime\prime} \left\{\sum_{j=0}^{J_s} B^{jd{p/2}} n^{-{p}/{2}}+n^{-{p/2}}\sum_{j=J_s}^{J_n} \sum_{k=1}^{K_j} B^{dj\bra{{p/2}-1}} \frac{\Theta_j\bra{p}}{B^{dj}n^{-{p/2}}}\right\}\\
	&\leq & C_1^{\prime\prime\prime} \left\{B^{J_sd{p/2}} n^{-{p}/{2}} + \sum_{j=J_s}^{J_n} \bra{\sum_{k=1}^{K_j}\abs{\betac}^p\norm{\psi_{j,k}}_{\Lp}^p} \right\} \\
	&\leq & C_1 \bra{ B^{J_s d{p/2}} n^{-{p}/{2}}+B^{-J_s sp}}\\
	&= & C_1 n^{-\frac{sp}{2s+d}},
	\end{eqnarray*}
given that 
	\begin{equation}\label{eqn:bjs}
	B^{J_s d{p/2}} n^{-{p}/{2}}=n^{\frac{dp}{2\bra{2s+d}}-{p}/{2}}=n^{-\frac{sp}{2s+d}}.
	\end{equation}

Similarly, for the region $Au$ we obtain 
	\begin{align*}
	Au  \leq C_2^\prime \bra{Au_1 + Au_2},
	\end{align*}		
	where
	\begin{align*}		
    Au_1	= &\sum_{j=0}^{J_s} \sum_{k=1}^{K_j}\norm{\psi_{j,k}}_{\Lp}^p \Ex (|{\hat \beta}_{j,k} - \beta_{j,k}|^p),\\ 
    Au_2    = &\sum_{j=J_s}^{J_n} \sum_{k=1}^{K_j}\norm{\psi_{j,k}}_{\Lp}^p \Ex \left[ |{\hat \beta}_{j,k} - \beta_{j,k}|^p\ind{\Thetaest\geq \frac{B^dj}{n^{-{p}/{2}}}}\right]. 
	\end{align*}

Using Eq.s~\eqref{eqn:Lpnorm} and \eqref{eqn:bjs}, it is easy to see that
	\begin{equation*}
	   Au_1 \leq C_2 n^{-\frac{sp}{2s+d}}.
	\end{equation*}
Regarding $Au_2$, using H\"older inequality with $1/\alpha^\prime+1/\alpha=1$, the generalized Markov inequality with even $m \in \mathbb{N}$ and Proposition \ref{prop:thres}, we obtain
	\begin{eqnarray*}
	Au_2 &\leq & C\sum_{j=J_s}^{J_n}\sum_{k=1}^{K_j} \norm{\psi_{j,k}}_\Lp^p \Ex\left[ |{\hat \beta}_{j,k} - \beta_{j,k}|^p\ind{\Thetaest\geq \frac{B^{dj}}{n^{{p/2}}}} \right]\\
	&\leq & C \sum_{j=J_s}^{J_n}\sum_{k=1}^{K_j} B^{jd\bra{{p/2}-1}}\! \Bigl\{ \Ex\bra{|{\hat \beta}_{j,k} - \beta_{j,k}|^{p\alpha^\prime}}\Bigr\}^{1/\alpha^\prime}\left[ \Pr\left[\left\{\Thetaest\geq \frac{B^{dj}}{n^{{p/2}}} \right\} \right]\right]^{1/\alpha}\\
	&\leq & C \sum_{j=J_s}^{J_n} B^{jd{p/2}} n^{-{p/2}}\left[\frac{\Ex \Bigl[\Bigl\{\Thetaest^m\Bigr\} \Bigr]}{\bra{B^{dj}/n^{{p/2}}}^m} \right]^{{1}/{\alpha}}\\
	&\leq & C \sum_{j=J_s}^{J_n}\!\!\! \frac{B^{j\frac{dp}{2}}}{ n^{{p/2}}} \, \left[\sum_{\!\bra{c_1,\ldots, c_m} \in \Gamma_{m,p}}\! \!\! \bra{\frac{B^{jd}}{n}}^{\bra{\frac{mp}{2}-\sum_{\ell=1}^m c_\ell}}\!\!\! \!\!\!\!\!B^{-j\{c_1 \bra{s-\frac{d}{p}}\}}B^{dj\{\bra{1-m}-\gamma\}}\! 
	\right]^{{1}/{\alpha}} \\
	&\leq& Au_{2,1}+Au_{2,2}, 
	\end{eqnarray*}
	where $Au_{2,1}$ and $Au_{2,2}$ are defined by splitting $\Gamma_{m,p}$ into two subsets, $\Gamma_{m,p}^{\bra{+}}$ and $\Gamma_{m,p}^{\bra{-}}$. These subsets are defined as 
	\begin{align*} 
		\Gamma_{m,p}^{\bra{+}}& :=\bbra{c_1,\ldots,c_m:\frac{mp}{2}-\sum_{i=i}^m c_m\geq 0};\\ \Gamma_{m,p}^{\bra{-}}&:=\bbra{c_1,\ldots,c_m:\frac{mp}{2}-\sum_{i=i}^m c_m\leq 0}.
	\end{align*} 
Note that $1-\gamma\in \sbra{0,1}$. Hence we choose $m,\alpha$ so that $m>1+{\alpha p}/{2}$. It can be easily verified that 
	\begin{eqnarray*}
	Au_{2,1} &\leq & C^\prime \sum_{j=J_s}^{J_n} B^{j\frac{dp}{2}} n^{-{p/2}}\Bigl[\sum_{\bra{c_1,\ldots, c_m}\in \Gamma_{m,p}^{\bra{+}} }B^{dj\{\bra{1-m}-\gamma\}} \Bigr]^{1/\alpha}\\
	&\leq & C^{\prime \prime} B^{J_s\frac{dp}{2}} n^{-{p/2}} \left\{ \sum_{\bra{c_1,\ldots, c_m}\in \Gamma_{m,p}^{\bra{+}} }B^{dJ_s\bra{1-m}} \right\}^{1/\alpha}\\
	&\leq & C^{\prime \prime \prime} n^{-\frac{sp}{2s+d}}
	\end{eqnarray*}
On the other hand, 
	\begin{eqnarray*}
	Au_{2,2} &\leq & \!C^{\prime}\!\!\!\!\!\!\!\!\sum_{\bra{c_1,\ldots, c_m}\in \Gamma_{m,p}^{\bra{-}}} \!\!\!\!\!\!\!\!\! B^{-J_s\frac{c_1\bra{s-{d}/{p}}}{\alpha}}\sum_{j=J_s}^{J_n}B^{j{dp}/{2}} n^{-{p/2}} \left\{{\!\bra{\frac{B^{jd}}{n}}^{{\bra{\frac{mp}{2}-\sum_{\ell=1}^m c_\ell}}/{\alpha}} \!B^{\frac{1-m-\gamma}{\alpha}dj}}\right\}\\
	& \leq & C^{\prime \prime}\bra{\frac{B^{dJ_s}}{n}}^{{p/2}}\!\!\!\!\!\!\!\!\sum_{\bra{c_1,\ldots, c_m}\in \Gamma_{m,p}^{\bra{-}}}\!\!\!\!\!\!\!\! B^{-J_s\frac{c_1}{\alpha}\bra{s-\frac{d}{p}}}{\bra{\frac{B^{J_sd}}{n}}^{\bra{\frac{mp}{2}-\sum_{\ell=1}^m c_\ell}/\alpha} B^{\frac{1-m-\gamma}{\alpha}dJ_s}}\\
	& \leq & C^{\prime \prime}\bra{\frac{B^{dJ_s}}{n}}^{{p/2}}\!\!\!\!\!\!\!\!\sum_{\bra{c_1,\ldots, c_m}\in \Gamma_{m,p}^{\bra{-}}} \!\!\!\!\!\!\!\!n^{-\frac{ c_1\bra{s-\frac{d}{p}}}{\alpha\bra{2s+d}}}{\{n^{\frac{2s}{\bra{2s+d}\alpha}\bra{\frac{mp}{2}-\sum_{\ell=1}^m c_\ell}}\} n^{\frac{d\bra{1-m-\gamma}}{\alpha\bra{2s+d}}}}\\
	& \leq & C^{\prime \prime \prime} n^{-\frac{sp}{2s+d}},
	\end{eqnarray*}
and therefore, 
	\begin{equation*}
	Au \leq C_2  n^{-\frac{sp}{2s+d}}. 
	\end{equation*}
	Consider now $Ua$. We have that
	\begin{eqnarray*}
	Ua &\leq & C_3^\prime \left(\sum_{j=0}^{J_s} \norm{\sum_{k=1}^{K_j}\betac\psi_{j,k}}_{\Lp}^p \Ex\left[ \ind{\Thetaest\leq \frac{B^{dj}}{n^{{p/2}}}}\ind{\Theta_j\geq 2\frac{B^{dj}}{n^{{p/2}}}}\right]\right. \\ & & \hspace{3cm} \left. +\sum_{j=J_s}^{J_n} \norm{\sum_{k=1}^{K_j}\betac\psi_{j,k}}_{\Lp}^p   \right)\\
	& = & Ua_1 + Ua_2.
	\end{eqnarray*}
	It is easy to see that 
	\begin{align*}
	Ua_2 \leq C_3^{\prime \prime} B^{-J_s ps} = C_3^{\prime \prime} n^{-\frac{-sp}{2s+d}}. 
	\end{align*}
	On the other hand, using the generalized Markov inequality, Proposition \ref{prop:threscent} with $m=p$ and Eq. \eqref{eqn:bjs}, we have that
	\begin{eqnarray*}
	Ua_{1} &=&\sum_{j=0}^{J_s}B^{dj\bra{{p/2}-1}}\Ex\left[\Theta_j \ind{\Thetaest\leq \frac{B^{dj}}{n^{{p/2}}}}\ind{\Theta_j\geq 2\frac{B^{dj}}{n^{{p/2}}}}\right]\\
	& \leq & C\sum_{j=0}^{J_s}B^{dj\bra{{p/2}-1}}\Theta_j \Pr\Bigl\{|{\hat \Theta}_j(p)-\Theta_j|\geq \frac{1}{2}\Theta_j \Bigr\}\ind{\Theta_j \geq 2\frac{B^{dj}}{n^{{p/2}}}}\\
	& \leq & C\sum_{j=0}^{J_s}B^{dj\bra{{p/2}-1}}\Theta_j^{1-m} \Ex\bra{|{\hat \Theta}_j(p)-\Theta_j|^m} \ind{\Theta_j \geq 2\frac{B^{dj}}{n^{{p/2}}}}\\
	& \leq & C\sum_{j=0}^{J_s}B^{dj\bra{{p/2}-1}}\Theta_j 
	\sum_{\ll=1}^p \bra{\Theta_j^{-1} \frac{B^{dj}}{n^{{p/2}}}}^{\frac{m\ell}{p}}\\
	& \leq & C\sum_{j=0}^{J_s}B^{dj\bra{{p/2}-1}}\frac{B^{dj}}{n^{{p/2}}}\\
	&\leq & C n^{\frac{-sp}{2s+d}} ,
	\end{eqnarray*}
and therefore,
	\begin{equation*}
Ua\leq C_3n^{-\frac{-sp}{2s+d}}. 
	\end{equation*}
Finally, in view of Eq.~\eqref{eqn:Lpnorm} and Eq.~\eqref{eqn:bjs}, we have that
\begin{eqnarray*}
Uu &\leq & C_4 ^\prime \left[\sum_{j=0}^{J_s} B^{jd\bra{{p/2}-1}}\Theta_j\bra{p}\ind{\Theta_j\leq 2\frac{B^{dj}}{n^{{p/2}}}} +\sum_{j=J_s}^{J_n} \sum_{k=1}^{K_j}\abs{\betac}^p \norm{\psi_{j,k}}_{\Lp}^p \right]\\
&\leq & C_4 ^{\prime \prime} \bra{\sum_{j=0}^{J_s} B^{jd{p/2}}n^{-{p/2}}+ \sum_{j=J_s}^{J_n}B^{-jsp}}\\
&\leq & C_4 n^{\frac{-sp}{2s+d}} .
\end{eqnarray*}

We now need to extend these results to any $p \in \bra{1,r}$ using the interpolation method described in Remark \ref{remark:interpole}. The two terms that have to be studied separately are $Au$ and $Ua$, in particular $Au_2$ and $Ua_1$, since they involve the probabilistic inequalities described in Propositions \ref{prop:thres} and \ref{prop:threscent}, holding only for even $p\in\mathbb{N}$.  According to \cite{kpt96}, the generalization in the case of $Au_2$ is obtained by bounding
\begin{equation*}
\Ex\left[\left\{\hat{\Theta}_j\bra{p}\right\}^m\right]\leq C \Ex \left[\left\{\hat{\Theta}_j\bra{p_1}\right\}^m\right]^\delta\Ex \left[\left\{\hat{\Theta}_j\bra{p_2}\right\}^m\right]^{1-\delta}
\end{equation*}  
Indeed,
\begin{eqnarray*}
Au_2 &\leq & C \sum_{j=J_s}^{J_n}\left[ B^{j\frac{dp_1}{2}} n^{-\frac{p_1}{2}}\!\left[ \sum_{\bra{c_1,\ldots, c_m} \in \Gamma_{m,p_1}}\!\bra{\frac{B^{jd}}{n}}^{\bra{\frac{mp_1}{2}-\sum_{\ell=1}^m c_\ell}} \right. \right.\\
&& \left. \left. \times B^{-j c_1 \bra{s-\frac{d}{p_1}}}B^{dj\bbra{\bra{1-m}-\gamma}} 
\right]^{1/\alpha}\right]^{\delta} \left[ B^{j\frac{dp_2}{2}} n^{-\frac{p_2}{2}} \right.\!\\
&\times &\! \! \! \!\!\left.\sbra{\sum_{\bra{c_1,\ldots, c_m} \in \Gamma_{m,p_2}}\!\!\!\!\!\!\!\!\!\bra{\frac{B^{jd}}{n}}^{\bra{\frac{mp_2}{2}-\sum_{\ell=1}^m c_\ell}} \!\!\!\!\!B^{-j c_1 \bra{s-\frac{d}{p_2}}}B^{dj\bbra{\bra{1-m}-\gamma}} 
	}^{1/\alpha}\right]^{1-\delta}.
\end{eqnarray*}
The result above follows from Eq.s \eqref{eqn:besovmin} and \eqref{eqn:besovmax}, so that 
\begin{align*}
\besovgen{p}{q}{s}\subset &\besovgen{p_1}{q}{s}; \besovgen{p}{q}{s}\subset \besovgen{p_2}{q}{s-d\bra{\frac{1}{p}-\frac{1}{p_2}}}.
\end{align*}
Straightforward calculations lead to the claimed result. On the other hand, in order to study $Ua_1$, we apply Lemma \ref{lemma:central} to obtain
\begin{align*}
Ua_1 \leq &C\sum_{j=0}^{J_s} B^{dj\bra{{p/2}-1}}\Ex\sbra{\Theta_j \ind{\Thetaest\leq \frac{B^{dj}}{n^{{p/2}}}}\ind{\Theta_j\bra{p}\geq 2\frac{B^{dj}}{n^{{p/2}}}}}\\
\leq &C\sum_{j=0}^{J_s} B^{dj\bra{{p/2}-1}}\Ex\left[\Theta_j \ind{\hat{\Theta}_j\bra{p_0}\leq \frac{B^{dj}}{n^{\frac{p_0}{2}}}}\ind{\Theta_j\bra{p}\geq 2\frac{B^{dj}}{n^{{p/2}}}}\right.\\
& \left.\ind{\hat{\Theta}_j\bra{p_0^\prime}\leq \frac{B^{dj}}{n^{\frac{p_0^\prime}{2}}}}\ind{\Theta_j\bra{p}\geq 2\frac{B^{dj}}{n^{{p/2}}}}\right]\\
\leq& C\sum_{j=0}^{J_s} B^{dj\bra{{p/2}-1}} \left[\Pr\bbra{\abs{\hat{\Theta}_j\bra{p_0^\prime}-\Theta_j\bra{p_0^\prime}}\geq\frac{\Theta_j\bra{p_0^\prime}}{2}}\right.\\
&+\left.\Pr\bbra{\abs{\frac{1}{n}\hat{\Theta}_j\bra{p_0}-\Theta_j\bra{p_0^\prime}}\geq\frac{\Theta_j\bra{p_0^\prime}}{2}}\right],
\end{align*}
because $\bbra{\Theta_j\bra{p}\geq 2 \frac{B^{dj}}{n^{{p/2}}}} \subset \bbra{\Theta_j\bra{p^\prime_0}\geq 2 \frac{B^{dj}}{n^{\frac{p_0^\prime}{2}}}}$. Finally, by applying Markov inequality with $m>p$, $m \in \mathbb{N}$ even, we have that
\begin{align*}
Ua_1 \leq & C\sum_{j=0}^{J_s} B^{dj\bra{{p/2}-1}}  \sum_{h=1}^{p_0^\prime} \bbra{ \Theta_j\bra{p_0^\prime-h}}^{m}n^{-\frac{mh}{2}}\\
&\times\bbra{\Theta_j\bra{p_0^\prime-h}}^{1-m}\ind{\Theta_j\bra{p_0^\prime}\geq 2\frac{B^{dj}}{n^{\frac{p_0^\prime}{2}}}}\\
\leq & C\sum_{j=0}^{J_s} B^{dj\bra{{p/2}-1}}  \sum_{h=1}^{p_0^\prime} \bbra{ \Theta_j\bra{p_0^\prime}}^{\frac{p-mh}{{p_0^\prime}}}n^{-\frac{mh}{2}}\\
&\times B^{dj\bra{1-\frac{p}{p_0^\prime} + \frac{mh}{p_0^\prime} }} \ind{\Theta_j\bra{p_0^\prime}\geq 2\frac{B^{dj}}{n^{\frac{p_0^\prime}{2}}}}\\
\leq & C\sum_{j=0}^{J_s} B^{dj\bra{{p/2}-1}}  \sum_{h=1}^{p_0^\prime} \bra{ \frac{B^{dj}}{n^{\frac{p_0^\prime}{2}}}}^{\frac{p-mh}{{p_0^\prime}}}n^{-\frac{mh}{2}} B^{dj\bra{1-\frac{p}{p_0^\prime} + \frac{mh}{p_0^\prime} }}\\
\leq & C n^{-\frac{sp}{2s+d}}
\end{align*}
\end{proof}
\begin{proof}[Proof of Theorem \ref{thm:infty}]
Similarly to the previous proof, note that 
\begin{align*}
\Ex\bra{\| {\hat f}_n - f\|_{L^\infty\bra{\sphere}}} \leq &C \l\{ \Ex\bra{\norm{\sum_{j=0}^{J_n}\sum_{k=1}^{K_j}\bra{\tau_j \betaest-\betac}\psi_{j,k}}_{L^{\infty}\bra{\sphere}}}\r.\\
& \l.+ \norm{\sum_{j>J_n}\sum_{k=1}^{K_j}\bra{\betac}\psi_{j,k}}_{L^{\infty}\bra{\sphere}}\r\}.
\end{align*}
If $f \in \besovgen{\infty}{\infty}{s}$, $\abs{\betac}\leq M B^{-j\bra{s+\frac{d}{2}}}$ for any $k=1,\ldots, K_j$, then 
by Eq.s \eqref{eqn:Lpnorm} and \eqref{eqn:boundsuppa} with $p=\infty$, we get
\begin{align*}
 \norm{\sum_{j>J_n}\sum_{k=1}^{K_j}\betac\psi_{j,k}}_{L^{\infty}\bra{\sphere}}&\leq \sum_{j>J_n}\norm{\sum_{k=1}^{K_j}\betac\psi_{j,k}}_{L^{\infty}\bra{\sphere}} \\&\leq C \sum_{j>J_n} \sup_{k=1,\ldots,K_j}\abs{\betac} \norm{\psi_{j,k}}_{L^\infty\bra{\sphere}}\\
 &\leq C \sum_{j>J_n} B^{-js}= O\bra{n^{-\frac{s}{d}}}=O\bra{n^{-\frac{s}{2s+d}}}.
\end{align*}
As far as the other term is concerned, following the same procedure as described in the proof of Theorem \ref{thm:main}, see also \cite{bkmpAoSb,dmg}, we obtain 
\begin{equation*}
\Ex\bra{\norm{\sum_{j=0}^{J_n}\sum_{k=1}^{K_j}\bra{\tau_j \betaest-\betac}\psi_{j,k}}_{L^{\infty}\bra{\sphere}}} \leq C \bra{Aa+Au+Ua+Uu},
\end{equation*}
where
\begin{align*}
Aa & \!=\!\sum_{j=1}^{J_n}\Ex\bra{\norm{\sum_{k=1}^{K_j}\bra{\tau_j \betaest-\betac}\psi_{j,k}\ind{\abs{\widehat{\Theta}_j^\infty}\geq \frac{B^{dj}}{n^{\frac{1}{2}}}} \ind{\Theta_j^{\infty}\geq \frac{B^{dj}}{2n^{\frac{1}{2}}}}}_{L^{\infty}\bra{\sphere}} \!
	}\\
Au & \!=\!\sum_{j=1}^{J_n}\Ex\bra{\norm{\sum_{k=1}^{K_j}\bra{\tau_j \betaest-\betac}\psi_{j,k}\ind{\abs{\widehat{\Theta}_j^\infty}\geq \frac{B^{dj}}{n^{\frac{1}{2}}}} \ind{\Theta_j^{\infty}< \frac{B^{dj}}{2n^{\frac{1}{2}}}}}_{L^{\infty}\bra{\sphere}} \!}\\
Ua & \!=\!\sum_{j=1}^{J_n}\Ex\bra{\norm{\sum_{k=1}^{K_j}\betac\psi_{j,k}\ind{\abs{\widehat{\Theta}_j^\infty}\leq \frac{B^{dj}}{n^{\frac{1}{2}}}} \ind{\Theta_j^{\infty}\geq \frac{2B^{dj}}{n^{\frac{1}{2}}}}}_{L^{\infty}\bra{\sphere}} \!}\\
Uu & \!=\!\sum_{j=1}^{J_n}\Ex\bra{\norm{\sum_{k=1}^{K_j}\betac\psi_{j,k}\ind{\abs{\widehat{\Theta}_j^\infty}< \frac{B^{dj}}{n^{\frac{1}{2}}}} \ind{\Theta_j^{\infty}< \frac{B^{dj}}{2n^{\frac{1}{2}}}}}_{L^{\infty}\bra{\sphere}} \!}
\end{align*}
Now $\Theta_j^\infty\geq B^{dj}/n^{\frac{1}{2}}$ implies $j\leq J_s$ (see Section \ref{sub:thres}) and in view of Eq.s~\eqref{eqn:Lpnorm} and \eqref{eqn:boundsuppa} with $p=\infty$, we get 
\begin{align*}
Aa \leq &C\sum_{j=1}^{J_s} B^{\frac{d}{2}j}\Ex\bra{\sup_{k=1,\ldots,K_j}|{\hat \beta}_{j,k} - \beta_{j,k}|}\\
\leq & CB^{\frac{d}{2}J_s}\bra{J_s+1} n^{-\frac{1}{2}}\\
= &O\bra{n^{-\frac{s}{2s+d}}}
\end{align*}
Consider now $Au$. Using $J_s$, we split this term into $Au=Au_1+Au_2$, as in the proof of Theorem \ref{thm:main}. Trivially, we get 
\begin{equation*}
Au_1 = O\bra{n^{-\frac{s}{2s+d}}}.
\end{equation*}
On the other hand, using Eq.~\eqref{eqn:boundsuppa} and Proposition \ref{prop:threspuno}, we get
\begin{align*}
	Au 
	\leq & \sum_{j=0}^{J_n}B^{\frac{d}{2}j} \Ex\bra{\sup_{k=1,\ldots,K_j}|{\hat \beta}_{j,k} - \beta_{j,k}|^2}^{\frac{1}{2}}\Pr\bra{\abs{\widehat{\Theta}_j^{\infty}-\Theta_j^\infty}\geq \frac{B^{dj}}{2n^{\frac{1}{2}}}}^{\frac{1}{2}}\\
	\leq& \sum_{j=0}^{J_n}B^{\frac{d}{2}j}\bra{j+1}n^{-\frac{1}{2}} B^{-\frac{d}{2}j}\\
	=&O\bra{J_n n^{-\frac{1}{2}}}=o\bra{n^{-\frac{s}{2s+d}}}
	\end{align*}
As far as $Ua$ is concerned, again $\Theta_j^\infty\geq B^{dj}/n^{\frac{1}{2}}$ implies $j\leq J_s$ (see Section \ref{sub:thres}), so that 
\begin{align*}
Ua &\leq\sum_{j=1}^{J_s} B^{j\frac{d}{2}}\norm{\sum_{k=1}^{K_j}\betac\psi_{j,k}}_{L^{\infty}\bra{\sphere}}\Pr\bra{ \abs{\widehat{\Theta}_j^\infty- \Theta_j^\infty}\geq \frac{B^{dj}}{n^{\frac{1}{2}}} }\\
&\leq\sum_{j=1}^{J_s} B^{j\frac{d}{2}}M\Pr\bra{ \abs{\widehat{\Theta}_j^\infty- \Theta_j^\infty}\geq \frac{B^{dj}}{n^{\frac{1}{2}}} }\\
&\leq \sum_{j=1}^{J_s} B^{j\frac{d}{2}}M \bra{\frac{B^{dj}}{n^{\frac{1}{2}}} }^{-2}\Ex\bra{ \abs{\widehat{\Theta}_j^\infty- \Theta_j^\infty}^2} \\
&\leq J_s n^{-\frac{1}{2}},
\end{align*}
where we used Eq.~\eqref{eqn:boundsuppa} and Proposition \ref{prop:threspuno}. 
Finally, we have that
\begin{align*}
Uu & \leq \sum_{j=1}^{J_n}\Ex\bra{\norm{\sum_{k=1}^{K_j}\betac\psi_{j,k}\ind{\abs{\widehat{\Theta}_j^\infty}< \frac{B^{dj}}{n^{\frac{1}{2}}}} \ind{\Theta_j^{\infty}< \frac{B^{dj}}{2n^{\frac{1}{2}}}}}_{L^{\infty}\bra{\sphere}}}\\
& \leq Uu_1+Uu_2,
\end{align*}
where 
\begin{align*}
Uu_1 &\leq \sum_{j=1}^{J_s}\norm{\sum_{k=1}^{K_j}\betac\psi_{j,k}}_{L^{\infty}\bra{\sphere}} \ind{\Theta_j^{\infty}< \frac{B^{dj}}{2n^{\frac{1}{2}}}}\\
&\leq \sum_{j=1}^{J_s}B^{\frac{3d}{2}j}n^{-\frac{1}{2}}
=O\bra{B^{\frac{3d}{2}J_s}n^{-\frac{1}{2}}}
\end{align*}
and
\begin{align*}
Uu_2 =\sum_{j>J_s} B^{\frac{d}{2}j}\sup_{k=1,\ldots,K_j}\abs{\betac}.
\end{align*}
Note that 
\begin{align*}
B^{\frac{3d}{2}J_s}n^{-\frac{1}{2}}=n^{\frac{d-s}{2s+d}},
\end{align*}
as claimed.
\end{proof}

\subsection{Proofs of the auxiliary results}
The proof of Lemma \ref{lemma:needloc} can be viewed as a generalization of the proof of Lemma 5.1 in Durastanti et al. \cite{dmp}. 
\begin{proof}[Proof of Lemma \ref{lemma:needloc}]
Using the needlets localization property given in Eq.~\eqref{eqn:needloc}, we have that
\begin{equation*}
\int_{\sphere} \prod_{i=1}^q \psi_{j,k_i}\bra{x}dx \leq C_\eta B^{jdq} \int_{\sphere} \prod_{i=1}^q \frac{1}{\bbra{1+B^{jq}d\bra{x,\xi_{j,k_i}}}^\eta}.
\end{equation*}
Let $S_1 = \bbra{x \in \sphere:d\bra{x,\xi_{j,k_1}}\geq \Delta/2 }$, so that $\sphere \subseteq S_1 \cup \overline{S_1}$. Therefore,
\begin{align*}
\int_{\sphere} \prod_{i=1}^q \frac{1}{\bbra{1+B^{jq}d\bra{x,\xi_{j,k_i}}}^\eta}\leq &\int_{S_1} \prod_{i=1}^q \frac{1}{\bbra{1+B^{jq}d\bra{x,\xi_{j,k_i}}}^\eta}\\ &+
\int_{\overline{S_1}} \prod_{i=1}^q \frac{1}{\bbra{1+B^{jd}d\bra{x,\xi_{j,k_i}}}^\eta}.
\end{align*}
From the definition of $\overline{S_1}$ and following Lemma 5.1 in Durastanti, Marinucci and Peccati \cite{dmp}, it follows that
\begin{align*}
\int_{\overline{S_1}} \prod_{i=1}^q \frac{1}{\bbra{1+B^{jd}d\bra{x,\xi_{j,k_i}}}^\eta} \leq&
\frac{2^{\eta\bra{q-1}}}{\bra{1+B^{jd}\Delta}^{\eta\bra{d-1}}}
\int_{\overline{S_1}}\frac{1}{\bbra{1+B^{jd}d\bra{x,\xi_{j,k_1}}}^\eta}dx\\ 
\leq&
\frac{2^{\eta\bra{q-1}}}{\bra{1+B^{jd}\Delta}^{\eta\bra{d-1}}}
\int_{\sphere}\frac{1}{\bbra{1+B^{jd}d\bra{x,\xi_{j,k_1}}}^\eta}dx\\
=&\frac{\bra{2\pi}^{d-1} 2^{\eta\bra{q-1}}}{\bra{1+B^{jd}\Delta}^{\eta\bra{d-1}}}
\int_{0}^{\pi}\frac{\sin \theta}{\bra{1+B^{jd}\theta}^\eta}d\theta \\
\leq&\frac{\bra{2\pi}^{d-1} 2^{\eta\bra{q-1}}B^{-dj}}{\bra{1+B^{jd}\Delta}^{\eta\bra{d-1}}}
\int_{0}^{\infty}\frac{y}{\bra{1+y}^\eta}dy \\
\leq&C_\eta^\prime\frac{\bra{2\pi}^{d-1} 2^{\eta\bra{q-1}}B^{-dj}}{\bra{1+B^{jd}\Delta}^{\eta\bra{d-1}}}.
\end{align*}
On the other hand, 
\begin{align*}
\int_{S_1} \prod_{i=1}^q \frac{1}{\bbra{1+B^{jd}d\bra{x,\xi_{j,k_i}}}^\eta} \leq&
\frac{2^{\eta}}{\bra{1\!+\!B^{jd}\Delta}^{\eta}}
\int_{S_1}\prod_{i=1}^{q-1}\!\frac{dx}{\bbra{1+B^{jd}d\bra{x,\xi_{j,k_{i+1}}}}^\eta}. 
\end{align*}
Let $S_2=\bbra{x \in S_1: d\bra{x,\xi_{j,k_2}}\geq \Delta/2}$. Then,
\begin{equation*}
S_1 \subseteq S_2 \cup \overline{S_2} \quad \text{ and } \quad \sphere \subseteq S_2\cup\overline{S_2}\cup\overline{S_1}.
\end{equation*}
 As far as $\overline{S_2}$ is concerned, we apply the same chain of inequalities as those used for $\overline{S_1}$. The integral over $S_2$ can be bound by the factor $2^{\eta}\bra{1+B^{jd}\Delta}^{-\eta}$ multiplied by the integral of the product of $q-2$ localization bounds of the needlets. By re-iterating the procedure, we obtain, a set of nested $S_g=\bbra{x \in S_{g-1}, d\bra{x,\xi_{j,k_g}}\geq \Delta/2}$, $g=1,\ldots,q$ so that $\sphere \subseteq S_q \cup \bigcup_{g=1}^q \overline{S_g}$, which yields the claimed result.
\end{proof}

\noindent The proof of Proposition \ref{prop:betac} is a simple modification of Proposition 6 in Durastanti et al. \cite{dmg} concerning complex random spin needlet coefficients. Many technical details are omitted for the sake of brevity. 
\begin{proof}[Proof of Proposition \ref{prop:betac}] For $p\leq 2$ we apply the classical convexity inequality such that for  a set of independent centered random variables $\bbra{Z_i}$ with finite $p$-th absolute moment, 
\begin{equation*}
\Ex\bra{\abs{\sum_{i=1}^n Z_i}^p}\leq \bbra{\Ex\bra{\abs{\sum_{i=1}^nZ_i}^2}}^{{p/2}}.
\end{equation*}
For $p>2$, we apply the Rosenthal inequality (see for instance H\"ardle et al. \cite{WASA}), that is, there exists a constant $c_p>0$ such that
\begin{equation*}
\Ex\bra{\abs{\sum_{i=1}^n Z_i}^p}\leq c_p \sbra{\sum_{i=1}^n\Ex\bra{\abs{Z_i}^p}+\bbra{\sum_{i=1}^n \Ex\bra{Z_i^2}}^{{p/2}}} 
\end{equation*}
On the other hand, since $B^{dj}\leq n$, we have that 
\begin{align*}
\Ex \bbra{\abs{\bra{f\bra{X}+\varepsilon}\needlet{X}-\betac}^p}\leq & 2^{p-1} \left[ \Ex\bbra{\abs{f\bra{X}\needlet{X}-\betac}^p}\right.\\& \left.+\Ex\bbra{\abs{\varepsilon\needlet{X}}^p} \right] \\
 \leq & c_p^\prime \bbra{M^p + \Ex\bra{\abs{\varepsilon}^p} }\norm{\psi_{j,k}}_\Lp^p\\
 \leq & c_p^{\prime \prime} B^{jd\bra{{p/2}-1}} \leq c_p^{\prime \prime \prime} n^{{p/2}-1}
\end{align*}  
Hence,
\begin{align*}
\Ex\bra{|{\hat \beta}_{j,k} - \beta_{j,k}|^p} \leq &\tilde{c}_p\bra{\frac{n^{{p/2}-1}}{n^{p-1}}+n^{-{p/2}}} =\tilde{c}_p n^{-{p/2}}
\end{align*}
\end{proof}
\noindent The proof of Proposition \ref{prop:thres} can be considered as the counterpart in the needlet framework of the proof of Lemma 2 in Kerkyacharian et al. \cite{kpt96}. 
\begin{remark} Any element $c_\ll$ can be decomposed as the sum of integers $c_{{i_1},\ldots,{i_\ell};\ll}$, where the $\ll$-dimensional vector $\bbra{{i_1},\ldots,{i_\ell}}\subset \bbra{1,\ldots,m}$ specifies the spherical needlets involved in each configuration (of size $\ll$) given by
\begin{equation*}\Ex\sbra{\bbra{f\bra{X}+\varepsilon}^\ll\psi_{j,k_{i_1}}\bra{X}\ldots,\psi_{j,k_{i_\ll}}\bra{X}}.
\end{equation*} 
The notation $\sbra{i_1,\ldots,i_\ll}$ denotes the set of all the possible combinations of $\bbra{i_1,\ldots,i_\ll}$ such that 
\begin{eqnarray*}
\sum_{\sbra{i_1,\ldots,i_\ll}}c_{i_1,\ldots,i_\ll;\ll}=c_\ll.
\end{eqnarray*}
\end{remark}
\begin{proof}[Proof of Proposition \ref{prop:thres}]
Note that 
\begin{align}\label{eqn:eqn0}
\Ex\sbra{\bbra{\Thetaest}^m} &=\binom{n}{p}^{-m} \sum_{\upsilon_1,\dots,\upsilon_m \in \Sigma_p} \Ex\bbra{\sum_{k{_1},\ldots,k_m} \prod_{\ll=1}^m\Psi_{j,k_{\ll}}^{\otimes p} \bra{X_{\upsilon_\ll},\varepsilon_{\upsilon_\ll}} }
\end{align}
where
\begin{align*}\label{eqn:eqn1}
&\Ex\bbra{\sum_{k{_1},\ldots,k_m} \prod_{\ll=1}^m\Psi_{j,k_{\ll}}^{\otimes p} \bra{X_{\upsilon_\ll},\varepsilon_{\upsilon_\ll}} }\\
&\quad \quad \quad \quad \quad \quad = \sum_{k_1,\ldots,k_m}\!\prod_{\ll=1}^m\!\prod_{\left[i_1,\ldots,i_\ll\right]}\!\Ex\sbra{\bbra{f\bra{X}+\varepsilon}^\ll \prod_{h=1}^\ll \psi_{j,k_{i_h}}\bra{X}}^{c_{i_1,\ldots,i_{\ll};\ll}}\notag\\
&\quad \quad \quad \quad \quad \quad =\sum_{k_1,\ldots,k_m}\!\prod_{h=1}^m \Ex\bbra{f\bra{X} \prod_{h=1}^\ll \psi_{j,k_{h}}\bra{X}}^{c_{h;1}}\notag\\ 
&\quad \quad \quad \quad \quad \quad \quad \prod_{\ll=2}^m\!\prod_{\left[i_1,\ldots,i_\ll\right]}\Ex\sbra{\bbra{f\bra{X}+\varepsilon}^\ll \prod_{h=1}^\ll \psi_{j,k_{i_h}}\bra{X}}^{c_{i_1,\ldots,i_{\ll};\ll}}.\notag
\end{align*}  
Using Eq.~\eqref{eqn:boundf} and the independence of the noise $\varepsilon$, for any $\ll \geq 2$ we have that 
\begin{align*}
\prod_{\left[i_1,\ldots,i_\ll\right]}\!&\Ex\sbra{\bbra{f\bra{X}+\varepsilon}^\ll \prod_{h=1}^\ll \psi_{j,k_{i_h}}\bra{X}}^{c_{i_1,\ldots,i_{\ll};\ll}}\! \! \! \!\!\! \! \! \!\!\! \! \! \!\!\leq C_{M,p,\ll} \Ex \bbra{\prod_{h=1}^\ll \psi_{j,k_h}\bra{X}}^{c_{i_1,\ldots,i_\ll,\ll} }\! \! \! ,
\end{align*}
where $C_{M,p,\ll}=2^{p-1}\bbra{M^\ll+\Ex\bra{\varepsilon^\ll}}$. In view of Lemma \ref{lemma:needloc}, we obtain
\begin{align*}
\prod_{\ll=2}^m\prod_{\left[i_1,\ldots,i_\ll\right]}\Ex \bbra{\prod_{h=1}^\ll \psi_{j,k_h}\bra{X}}^{c_{i_1,\ldots,i_\ll,\ll} } \! \! \! \!\!\leq&  C^\prime\prod_{\ll=2}^m\prod_{\left[i_1,\ldots,i_\ll\right]}\norm{\psi_{j,k}}_{L^\ell \bra{\sphere}}^{\ll c_{i_1,\ldots,\i_\ll;\ll}} \\
&\quad \quad \quad \quad  \ind{k\!=\!k_{i_1}\!=\!\ldots\!=\!k_{i_\ll}}\\
\leq& C^\prime\prod_{\ll=2}^m\norm{\psi_{j,k}}_{L^\ell \bra{\sphere}}^{\ll \sum_{\left[i_1,\ldots,i_\ll\right]}c_{i_1,\ldots,\i_\ll;\ll}} \Delta\bra{k,\ell}\\
\leq &C^{\prime \prime} B^{jd\bra{\sum_{\ll=2}^m \frac{\ll c_\ll}{2}-\sum_{\ll=2}^m c_\ll}}\Delta\bra{k,m}\\
= &C^{\prime \prime} B^{jd\bra{\sum_{\ll=1}^m \frac{\ll c_\ll}{2}-\sum_{\ll=1}^m c_\ll}}B^{jd\frac{c_1}{2}}\Delta\bra{k,m},
\end{align*}
Note that $\prod_{\left[i_1,\ldots,i_\ll\right]}\ind{k\!=\!k_{i_1}\!=\!\ldots\!=\!k_{i_\ll}}$ implies that al least $\ll$ $k_h$ indexes are equal. 
Thus, using Eq.~\eqref{eqn:besovmin}, we obtain
\begin{align*}
&\sum_{k_1,\ldots,k_m}\prod_{h=1}^m\Ex\bbra{f\bra{X}\psi_{j,k_h}\bra{X}}^{c_{h,1}}=\sum_{k}\beta_{j,k}^{\sum_{h=1}^m c_{h,1}}\\
&\quad \quad \quad \quad \quad\quad \quad \quad  \leq C  B^{-j\sum_{h=1}^m c_{h,1}\bbra{s+d\bra{\frac{1}{2}-\frac{1}{p}}}} B^{jd\bbra{1-\frac{\min\bra{p,\sum_{h=1}^m c_{h;1}}}{p}}}\\
&\quad \quad \quad \quad \quad \quad\quad \quad =C  B^{-jc_{1}\bbra{s+d\bra{\frac{1}{2}-\frac{1}{p}}}} B^{jd\bra{1-\gamma}}
\end{align*}
where $\gamma =\min\bra{p,c_{1}}/p$. Hence,
\begin{align*}
\prod_{\left[i_1,\ldots,i_\ll\right]}\!\!\Ex\sbra{\bbra{f\bra{X}+\varepsilon}^\ll \prod_{h=1}^\ll \psi_{j,k_{i_h}}\bra{X}}^{c_{i_1,\ldots,i_{\ll};\ll}}\!\leq& C^{\prime \prime \prime}B^{jd\bra{\frac{mp}{2}-\sum_{\ll=1}^m c_\ll}} \\ &\quad B^{-jc_{1}\bra{s-\frac{d}{p}}} B^{jd\bra{1-\gamma}}
\end{align*}
Finally, for any fixed configuration $c_1,\ldots,c_m$, the number of possible combinations is bounded by   
\begin{equation*}
C^\star\binom{n}{c_m}\binom{n-c_m}{c_{m-1}}\ldots\binom{n-\sum_{\ell=2}^m{c_\ell}}{c_1},
\end{equation*}
where $C^\star$ denotes the possible choices of $\bbra{c_{i_1,\ldots,i_\ll;\ll}}$ for any $\ll$ and does not depend on $n$. 
In view of the Stirling approximation, $\binom{n}{p}\approx n^p$, the number of possible combinations is bounded by $Cn^{-\sum_{\ll=1}^{m}c_\ll}$. Using the aforementioned results, Eq.~\eqref{eqn:eqn0} is bounded by 
 \begin{align*}
\Ex\sbra{\bbra{\Thetaest}^m} \leq  \tilde{C}_1 \!\!\!\!\sum_{\bra{c_1,\ldots, c_m} \in \Gamma_{m,p}}\!\!\!\!\bra{\frac{B^{jd}}{n}}\!^{\bra{\frac{mp}{2}-\sum_{\ell=1}^m\! c_\ell}} \frac{B^{-j\bbra{c_1 \bra{s-\frac{d}{p}}-d\bra{1-\gamma}}}}{n^{\frac{mp}{2}}}, 
\end{align*}
as claimed.
\end{proof}
The proof of Proposition \ref{prop:threscent} can be viewed as the counterpart of the proof of Lemma 3 in Kerkyacharian et al. \cite{kpt96} in the needlet framework. 
\begin{proof}[Proof of Proposition \ref{prop:threscent}]
Following Kerkyacharian et al. \cite{kpt96}, note that
\begin{equation*}
\prod_{i=1}^p x_i -\beta^p = \sum_{h=1}^p \beta^{p-h} \sum_{1\leq t_1 < \ldots, < t_h \leq p}\prod_ {i=1}^h \bra{x_{t_i}-\beta}.
\end{equation*}
Let 
\begin{equation*}
\tilde{\Psi}_{j,k}\bra{X,\varepsilon}:=\bbra{f\bra{x}+\varepsilon}\needlet{x}-\betac. 
\end{equation*} 
so that 
\begin{equation}\label{eqn:deggy}
\Ex\bbra{\tilde{\Psi}_{j,k}\bra{X,\varepsilon}}=0.
\end{equation}
We therefore obtain
\begin{equation*}
\Thetaest-\Theta_j\bra{p}= \binom{n}{p}^{-1}\sum_{k=1}^{K_j} \sum_{\upsilon \in \Sigma_p}  \sum_{h=1}^p \betac^{p-h}\sum_{\iota \subset \upsilon, \iota \in \Sigma_h}\tilde{\Psi}_{j,k}^{\otimes h}\bra{X_{\iota},\varepsilon_{\iota}},
\end{equation*}
and, reversing the order of integration, we have that
\begin{equation*}
\Thetaest-\Theta_j\bra{p}=\sum_{k=1}^{K_j} \sum_{h=1}^p \frac{\binom{n-h}{p-h}}{\binom{n}{p}}  \betac^{p-h}\sum_{\iota \in \Sigma_h}\tilde{\Psi}_{j,k}^{\otimes h}\bra{X_{\iota},\varepsilon_{\iota}},
\end{equation*}
Hence, we can rewrite
\begin{align*}
\Ex\bbra{\bra{\Thetaest -\Theta_j\bra{p}}^m} \leq & p^{m-1} \sum_{h=1}^{p} \bbra{\frac{\binom{n-h}{p-h}}{\binom{n}{p}}}^m  \sum_{k_1,\ldots,k_m}\abs{\prod_{\ll=1}^m\beta_{j,k_\ll}}^{p-h}\\
&\sum_{\iota_{1},\ldots,\iota_{m}\in\Sigma_h}\Ex\bbra{\prod_{\ll=1}^m \tilde{\Psi}_{j,k_\ll}^{\otimes h}\bra{X_{\iota_{\ll}},\varepsilon_{\iota_\ll}} }.
\end{align*}
Similar to the proof of Proposition \ref{prop:thres}, we fix a configuration of indexes ${\iota_1,\ldots,\iota_m \in \Sigma_{h}}$, corresponding to the set of coefficients $\bbra{c_1,c\ldots,c_h}$. Because, in this case, the considered U-statistic is degenerate, we discard all the combinations with $c_1 \neq 0$, in view of \eqref{eqn:deggy}. On the other hand, following Lemma \ref{lemma:needloc}, we have that
\begin{align}\label{eqn:esponente}
\notag \sum_{\iota_{1},\ldots,\iota_{m}\in\Sigma_h}\Ex\bbra{\tilde{\Psi}_{j,k_1}^{\otimes h}\bra{X_{1},\varepsilon_1}\ldots\tilde{\Psi}_{j,k_h}^{\otimes h}\bra{X_{h},\varepsilon_h} }& \leq C n^{\sum_{\ll=1}^h c_\ll} B^{dj\bbra{\sum_{\ll=2}^h \bra{\frac{\ll}{2}-1}c_\ll}}\\
&=Cn^{\sum_{\ll=1}^h c_\ll}  B^{dj\bra{\frac{mh}{2}-\sum_{\ll=2}^m c_\ll}}.
\end{align}
Furthermore, $mh=\sum_{\ll=2}^m \ll c_\ll > 2\sum_{\ll=2}c_\ll$ implies that the exponent in
the last term of \eqref{eqn:esponente} is positive, so that 
\begin{equation*}
\sum_{\iota_{1},\ldots,\iota_{m}\in\Sigma_h}\Ex\bbra{\tilde{\Psi}_{j,k_1}^{\otimes h}\bra{X_{1},\varepsilon_1}\ldots\tilde{\Psi}_{j,k_h}^{\otimes h}\bra{X_{h},\varepsilon_h} } \leq C n^{\frac{mh}{2}},
\end{equation*}
because $B^{dj}\leq n$. Finally, using the Stirling approximation and Eq.~\eqref{eqn:besovmin} we have that
\begin{align*}
\Ex\sbra{\bbra{\Thetaest -\Theta_j\bra{p}}^m} \leq & C^\prime \sum_{h=1}^p \frac{n^{m\bra{p-h}}}{n^{mp}} \bra{\sum_k \abs{\betac}^{p-h} }^m n^{\frac{mh}{2}}\\
\leq &  C^{\prime \prime}\sum_{h=1}^p n^{-\frac{mh}{2}} \bbra{B^{jd\frac{h}{p}}\bra{\sum_k \abs{\betac}^{p}}^{1-\frac{h}{p}} }^m \\
\leq &  \tilde{C}_2 \sum_{h=1}^p \bra{\frac{B^{jd}}{n^{{p/2}}}}^{\frac{mh}{p}}\bbra{ \Theta_j\bra{p}}^{\bra{1-\frac{h}{p}}m},  
\end{align*}
as claimed.
\end{proof}
The proof of Lemma \ref{lemma:central} is the counterpart of the proof of Lemma 6 in Kerkyacharian et al. \cite{kpt96} in the needlet framework. 
\begin{proof}[Proof of Lemma \ref{lemma:central}]
	Following Kerkyacharian et al. \cite{kpt96} and the results obtained in the previous proof, we have that
	\begin{align*}
		\hat{\Theta}_j\bra{p_0}-\Theta_j\bra{p_0}= &\binom{n}{p_0}^{-1}\sum_{k=1}^{K_j} \sum_{\upsilon \in \Sigma_{p_0}}  \sum_{h=1}^{p_0^\prime} \betac^{p_0^\prime-h}\left\{\sum_{\iota \subset \upsilon, \iota \in \Sigma_h}\tilde{\Psi}_{j,k}^{\otimes h}\bra{X_{\iota},\varepsilon_{\iota}}\right.\\
		 &+\frac{1}{n^{\frac{1}{2}}}\sum_{\iota \subset \upsilon, \iota \in \Sigma_{h-1}}\tilde{\Psi}_{j,k}^{\otimes \bra{h-1}}\bra{X_{\iota},\varepsilon_{\iota}} \\
		& \left.+\frac{1}{n}\sum_{\iota \subset \upsilon, \iota \in \Sigma_{h-2}}\tilde{\Psi}_{j,k}^{\otimes \bra{h-2}}\bra{X_{\iota},\varepsilon_{\iota}} \right\},		
	\end{align*}
with the convention
\begin{align*}
\sum_{\iota \subset \upsilon, \iota \in \Sigma_0}\tilde{\Psi}_{j,k}^{\otimes \bra{0}}\bra{X_{\iota},\varepsilon_{\iota}}=2; \ \sum_{\iota \subset \upsilon, \iota \in \Sigma_{h}}\tilde{\Psi}_{j,k}^{\otimes \bra{h}}\bra{X_{\iota},\varepsilon_{\iota}}=0 \ \text{for }h<0. 
\end{align*}
Reversing the order of integration and applying an analogous procedure to the one used in the proof of Proposition \ref{prop:threscent}, we achieve the claimed result.
\end{proof}	
Proposition \ref{prop:threspuno} is proved by using the general properties of the needlets.
\begin{proof}[Proof of Proposition \ref{prop:threspuno}]
It is easy to see that
	\begin{align*}
	\Ex\!\bbra{\!\bra{\widehat{\Theta}_j^\infty}^2\!} \!&=\! \frac{1}{n^2}\!\sum_{k=1}^{K_j}\sum_{i=1}^n\Ex\!\sbra{\!\bbra{\needlet{X_i}Y_i}^2}\!\!+\!\!\sbra{\!\frac{1}{n}\sum_{k=1}^{K_j}\sum_{i=1}^{n}\Ex\!\bbra{\!
			\needlet{X_i}Y_i}\!}^2 \\
	&\leq \frac{B^dj}{n}\bra{M^2+\sigma_\varepsilon^2}\norm{\psi_{j,k}}_{\Ltwo} + \bra{\Theta_j^\infty}^2,
	\end{align*}
as claimed.
\end{proof}

\begin{acknow*}
{\rm The author wishes to thank M. Konstantinou, A. Ortiz, A. Renzi and N. Turchi for precious discussions and hints. Furthermore, the author wishes to acknowledge the Associate Editor, the referees and the Editor-in-Chief for the insightful remarks and suggestions which led to a substantial improvement of this work.}  
\end{acknow*}


\end{document}